\newtheorem{theorem}{Theorem}[section]
\newtheorem{lemma}[theorem]{Lemma}
\newtheorem{conjecture}[theorem]{Conjecture}
\newtheorem{proposition}[theorem]{Proposition}
\newtheorem{construction}[]{Construction}
\providecommand{\customgenericname}{}
\newcommand{\newcustomtheorem}[2]{%
  \newenvironment{#1}[1]
  {%
   \ifdefined\crefalias\crefalias{innercustomgeneric}{#2}\fi
   \renewcommand\customgenericname{#2}%
   \renewcommand\theinnercustomgeneric{##1}%
   \innercustomgeneric
  }
  {\endinnercustomgeneric}%
  \ifdefined\crefname\crefname{#2}{#2}{#2s}\fi
}
\newcommand{\setgivensymbol}[0]{:}
\newcommand{\obs}[2]{\operatorname{Obs}\left(#1;#2\right)}
\newcommand{\obsq}[2]{\operatorname{Obs}\left(#1;#2,q\right)}
\newcommand{\sizeobs}[2]{|\operatorname{Obs}\left(#1;#2\right)|}
\newcommand{\fork}[1]{\forka{#1}{ }}
\newcommand{\forka}[2]{\Psi_{#1}^{#2}}
\newcommand{\spoon}[1]{\spoona{#1}{ }}
\newcommand{\spoona}[2]{\Phi_{#1}^{#2}}
\newcommand{\gadgeta}[2]{R_{#1}^{#2}}
\newcommand{\glueg}[2]{\glue{G}{#1}{#2}}
\newcommand{\glue}[3]{{#1}\boxminus_{{#2}}{#3}}
\newcommand{\expol}[3]{\mathcal{E}\left(#1;#2,#3\right)}
\newcommand{\expolq}[2]{\expol{#1}{#2}{q}}
\tikzstyle{Meta Node}=[fill=none, draw=none, shape=circle]
\tikzstyle{Text Node}=[draw=none, fill=none, anchor=center, shape=rectangle, inner sep=0pt, outer sep=0pt, text height=9pt]
\tikzstyle{Phantom Node}=[draw=none, fill=none, shape=rectangle, inner sep=0pt, outer sep=0pt, text height=0pt]
\tikzstyle{Black Node}=[fill=white, draw=black, shape=circle]
\tikzstyle{Blue Node}=[fill=blue, draw=black, shape=circle]
\tikzstyle{Red Node}=[fill=red, draw=black, shape=circle]
\tikzstyle{Yellow Node}=[fill=yellow, draw=black, shape=circle]
\tikzstyle{Black Edge}=[-]
\tikzstyle{Dashed Edge}=[dashed]
\title{On Fragile Power Domination}
\author{Beth Bjorkman\footnote{Air Force Research Laboratory} \and Sean English\footnote{University of North Carolina Wilmington, \texttt{EnglishS@uncw.edu}} \and Johnathan Koch\footnote{Applied Research Solutions} \and Amanda Verga\footnote{Trinity College, ATRC}}
\date{\today}
\begin{document}

\maketitle

\begin{abstract}
    Power domination is a graph theoretic model which captures how phasor measurement units (PMUs) can be used to monitor a power grid. Fragile power domination takes into account the fact that PMUs may break or otherwise fail. In this model, each sensor fails independently with probability $q\in [0,1]$ and the surviving sensors monitor the grid according to classical power domination. 
    
    We study the expected number of observed nodes under the fragile power domination model. We give a characterization for when two networks and initial sensor placements will behave the same according to this expectation. We also show how to control the behavior of this expectation by adding structure to a network.
\end{abstract}
\section{Introduction}

    Power domination is a graph propagation process which models the observation of a power grid via \emph{phasor measurement units (PMUs)}. 
    As a graph process, power domination was originally defined by Brueni and Heath~\cite{BH2005} and then simplified by Haynes et al.~\cite{HHHH2002}. 
    Given a graph $G$, we choose some initial set $S\subseteq V(G)$ where PMUs are placed. 
    A PMU observes the vertex that it is placed on and all neighboring vertices. 
    This is called the \emph{domination step}.
    Conservation of energy laws are then applied; if there is an observed vertex with exactly one unobserved neighbor, the neighbor becomes observed.
    This is called the \emph{zero forcing step}.
    The domination step happens only once, while the zero forcing step occurs repeatedly until no new vertices are be observed.
    
    The most commonly studied question in power domination is the minimum number of PMUs needed so that the entire vertex set is observed when the propagation process finishes. 
    This is called the \emph{power domination number} of $G$, and is denoted $\gamma_P(G)$. 
    Power domination has received considerable attention in the literature recently (see e.g. \cite{AK2022, BFFFHV2018, DMKS2008, LMW2020, PAHA2024, YW2022}).
    
    A new trend in power domination research has been to take into account the fact that sensors may fail. 
    Pai, Chang, and Wang proposed \emph{$k$-fault tolerant power domination}~\cite{PCW2010}, which seeks to find sets which are power dominating even if any $k$ of the sensors are removed (fail). 
    A similar model, \emph{PMU-defect-robust power domination}~\cite{BCF2023}, proposed by Bjorkman, Conrad, and Flagg, also allows a fixed number of sensors to fail, however, this model allows for multiple sensors to be placed at the same vertex.
    
    PMU-defect-robust and $k$-fault tolerant power domination models both study the case where a known number of sensors fail and still seek to determine the minimum number of sensors required to observe the entire graph. 
    However, knowing exactly how many sensors will fail is not always a realistic assumption.
    Instead, \emph{fragile power domination}~\cite{BBFK2023}, proposed by Bjorkman, Brennan, Flagg, and Koch, considers the case where each sensor fails independently with probability $q$, followed by the standard power domination propagation process using the remaining set of sensors.
    In what follows, we will sometimes allow for PMU placements which are multisets as placing more than one sensor at a vertex increases the probability it will be observed.

    \subsection{Fragile Power Domination}
        As defined in \cite{HHHH2002}, the \emph{power domination process} on a graph $G$ with initial PMU placement $S$ is the following:
        \begin{enumerate}
            \item
                \textbf{(Domination Step)}: Let $B:= N[S]$.
            \item 
                \textbf{(Zero Forcing Step)}: While there exists some vertex $x \in B$ such that there is a single vertex, say $y$, in $N[x] \setminus B$, add $y$ to $B$. When this happens, we say $x$ \emph{forces} $y$.
        \end{enumerate}
        The set $B$ at the end of applying the power domination process is denoted $\obs{G}{S}$, and we say that vertices in $\obs{G}{S}$ are \emph{observed}.
        Vertices not in $\obs{G}{S}$ are correspondingly called unobserved.
        When the PMU placement is a singleton set $\{v\}$, we will write $\obs{G}{v}$ in place of $\obs{G}{\{v\}}$.

        To account for possible PMU failure, we consider the \emph{fragile power domination process}, first presented in~\cite{BBFK2023}.
        Given a graph $G$ with initial PMU placement $S\subseteq V(G)$ and sensor failure probability $q$, we run the following process:
        \begin{enumerate}
            \item[0.]
                \textbf{(Sensor Failure Step)}: Start with $S^* = \emptyset$.
                For each $v \in S$, independently add $v$ to $S^*$ with probability $(1-q)$.
                If $v$ is added to $S^*$, we say the sensor $v$ \emph{succeeds}.
                Otherwise, the sensor $v$ \emph{fails}.
            \item
                \textbf{(Domination Step)}: Let $B := N\left[S^*\right]$.
            \item 
                \textbf{(Zero Forcing Step)}: While there exists some vertex $x \in B$ such that there is a single vertex, say $y$, in $N[x] \setminus B$, add $y$ to $B$.
        \end{enumerate}
        The set $B$ at the end of applying the fragile power domination process is denoted $\obsq{G}{S}$.
        Note that $\obsq{G}{S}$ is a random variable.

        We are interested in how many vertices we expect to observe after running the fragile power domination process.
        In particular, we will concern ourselves with The \emph{expected value polynomial}, $\expolq{G}{S}: = \mathbb{E}\left[|\obsq{G}{S}|\right]$.
        Given a graph $G$, PMU placement set $S\subseteq V(G)$, and probability of PMU failure $q$,  observe that \begin{equation}\label{equation set view of E}
            \expolq{G}{S}=\sum_{W\subseteq S}|\obs{G}{W}|q^{|S\setminus W|}(1-q)^{|W|}. 
        \end{equation}
    
    \subsection{Overview of main results}

        In Section~\ref{sec:gadgets}, we develop structures that can be added to a graph that give control over the expected value polynomial for a given PMU placement. Our first result implies that we can add structure to a graph to control almost all the terms in the expected value polynomial.
        
        \begin{customthm}{\ref{theorem induced any polynomial}}
            Let $d\in\mathbb{N}$. Let $G$ be a graph and $S\subseteq V(G)$ with $|S|=d$.
            For any integers $c_d,c_{d-1},c_{d-2},\dots,c_2$, there exists a graph $G'$ with $G$ as an induced subgraph, and a choice of $c_1$ and $c_0$ such that
            \begin{align*}
                \expolq{G'}{S}=\sum_{k=0}^d c_kq^k.
            \end{align*}
        \end{customthm}
        
        Theorem~\ref{theorem induced any polynomial} will follow from a stronger statement involving multisets presented in Section~\ref{sec:gadgets}.
        In addition, we will show that Theorem~\ref{theorem induced any polynomial}, along with the multiset analogue, are best possible in the sense that the constant and linear coefficients cannot be controlled in the same way as the other coefficients.

        Section~\ref{sec:linearity} investigates when two graphs can have the same expected value polynomial, and when the expected value polynomial is lower degree than one might expect from the number of sensors.
        We provide a characterization of when two graphs (along with two PMU placements of the same size) can have the same expected value polynomial.
        
        \begin{customthm}{\ref{theorem linear algebraic characterization of copolynimal graphs}}
            Let $G$ and $G'$ be graphs, and let $S\subseteq V(G)$ and $S'\subseteq V(G')$ be such that $|S|=|S'|=:s$. The polynomials
            \[
                \expolq{G}{S}=\expol{G'}{S'}{q}
            \]
            if and only if for each $1\leq k\leq s$, 
            \[
                \sum_{W\in\binom{S}{k}}|\obs{G}{W}|=\sum_{W'\in\binom{S'}{k}}|\obs{G'}{W'}|.
            \]
        \end{customthm}
        
       The seminal work on power domination (\cite{BBFK2023}) gave a  sufficient condition for when an expected value polynomial is linear. We show that this condition is not necessary via an example presented in Section~\ref{sec:linearity}, and give a slightly stronger condition which is both necessary and sufficient in the following theorem. 
        
        \begin{customthm}{\ref{theorem linear}}
            Given a graph $G$ and a set $S\subseteq V(G)$, $\expolq{G}{S}$ is linear if and only if for all $1\leq k \leq |S|$ we have
            \[
            \sum_{W\in \binom{S}{k}} \sizeobs{G}{W} = \binom{s-1}{k-1} \sum_{v\in S} \sizeobs{G}{v}.
            \]
        \end{customthm}
        We also provide a conjecture which would give a necessary and sufficient condition the expected value polynomial $\expolq{G}{S}$ to be degree at most $\ell<|S|$. We show one direction of this conjecture, and that for many situations where $\expolq{G}{S}$ is quadratic it is also necessary.
        
        Finally, in Section~\ref{sec:highdeg} we give a method for determining when one placement of PMUs is better than another. Using this, we explore a common heuristic in power domination---that higher degree vertices tend to be better locations for PMUs---and show that in fragile power domination this is not always true.

    \subsection{Definitions, notation and tools}
        Given a set $A$, we will use $2^A$ to denote the power set of $A$, i.e. the set of all subsets of $A$, and $\binom{A}{k}$ to denote the set of $k$-element subsets of $A$.
        Given a function $f$, if $A$ is a subset of the domain of $f$, we write $f(A)$ to denote the image of $A$, i.e. $f(A)=\{f(a):a\in A\}$.
               
        Given a set $S$ and a function $f:S\to \mathbb{N}$, we will let $M(S,f)$ denote the multiset with support $S$ where each $s\in S$ has multiplicity $f(s)$. We note here that if $M=M(S,f)$ is a multiset placement of PMUs, then analogous to~\eqref{equation set view of E}, we can write
        \begin{equation}\label{equation multiset form of expol}
        \mathcal{E}(G;M,q)=\sum_{S'\subseteq S}|\mathrm{Obs}(G;S')|q^{f(S\setminus S')}\prod_{s\in S'}(1-q^{f(s)}).
        \end{equation}
        
        Another useful form for the expected value polynomial is
        \begin{equation}\label{equation vertex view of E}
            \expolq{G}{S}=\sum_{v\in V(G)}\mathrm{Pr}(v\text{ is observed}),
        \end{equation}
        where we note that~\eqref{equation vertex view of E} holds regardless of if $S$ is a set or multiset. 

        Given a polynomial $p$ over the variable $q$ and a non-negative integer $j$, we will write $p[q^j]$ to denote the coefficient of $q^j$ in the expanded form of $p$.

        Given a graph $G$ with a vertex $v\in V(G)$, if we \emph{add a leaf to $G$ at $v$}, we mean to add a single vertex $u\not\in V(G)$ to the graph, and then add the edge $uv$.
        Given a graph $G$, a set $F\subseteq V(G)$ is a \emph{fort} if there are no vertices in $V(G)\setminus F$ which have exactly one neighbor in $F$.
        Forts were first introduced by Fast and Hicks~\cite{FH2018} in the context of zero forcing, as a fort cannot be zero forced into.
        The \emph{entrance} of a fort $F$ is $N[F]\setminus F$, i.e. the collection of vertices which are not in $F$, but are adjacent to a vertex in $F$.
        In~\cite{BBEFFH2019}, Bozeman et al. showed that in power domination, a sensor must be placed either in $F$ or in the entrance of $F$ for vertices in $F$ to be observed.

\section{Controlling the Coefficients of the Expected Value Polynomial}\label{sec:gadgets}

    In this section, we prove a stronger version of Theorem~\ref{theorem induced any polynomial} for multisets, as well as show that these theorems are in some sense ``best possible''. In order to do this, we will first describe \emph{gadgets} which will help us modify the expected value polynomial of a given graph and PMU placement.

    \subsection{Gadgets}
        We will introduce gadgets that will be used to construct a graph $G'$ which has the graph $G$ as an induced subgraph.
        The structure of the gadgets will allow us to control the coefficients of $q^k$ for some values of $k$.
        One gadget will allow us to make the coefficient of $q^k$ arbitrarily large (i.e. positive), while the other will allow us to make the coefficient arbitrarily small (i.e. negative), with the direction of the gadget impact dependent upon the parity of $k$. 
        Each gadget will contain labeled vertices: a set of vertices called \emph{affix} vertices, one vertex called the \emph{connection vertex} and one vertex called the \emph{path head}.
        When we add a gadget to a graph $G$ with an initial PMU placement $S$, the affix vertices will be identified with some subset of $S$ (where we affix the gadget to $G$) and the path head vertex will lead to a path on the desired number of vertices for controlling the coefficient. 
        
        We now construct two gadgets for each $k\in \mathbb{N}$, $k\geq 2$.
        Examples of each are given in \cref{figure gadgets}.
    
        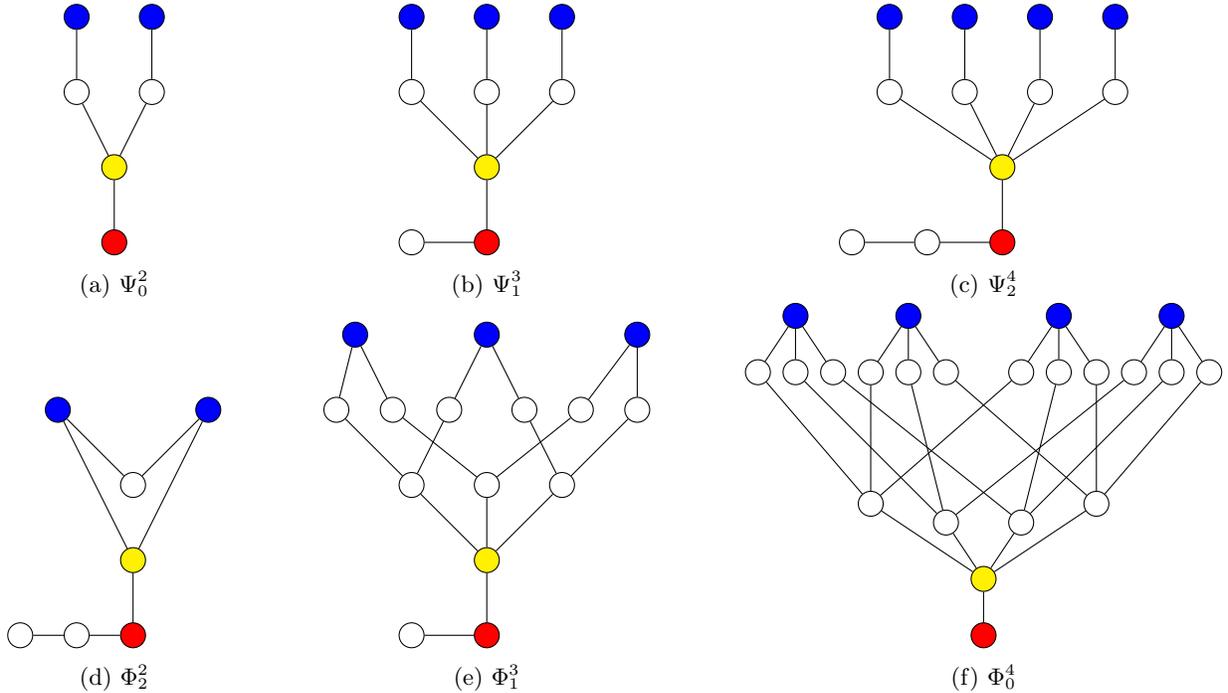
\begin{figure}[htb]
            \begin{subfigure}[t]{0.2\textwidth}
                \centering
                \begin{tikzpicture}[rotate=180]
                    \node [style=Yellow Node] (0) at (0, 1) {};
                    \node [style=Black Node] (1) at (-0.5, 0) {};
                    \node [style=Black Node] (2) at (0.5, 0) {};
                    \node [style=Red Node] (3) at (0, 2) {};
                    \node [style=Blue Node] (4) at (-0.5, -1) {};
                    \node [style=Blue Node] (5) at (0.5, -1) {};
                    \draw [style=Black Edge] (4) to (1);
                    \draw [style=Black Edge] (1) to (0);
                    \draw [style=Black Edge] (0) to (3);
                    \draw [style=Black Edge] (0) to (2);
                    \draw [style=Black Edge] (2) to (5);
                \end{tikzpicture}
                \caption{$\forka{0}{2}$}
            \end{subfigure}
            \hfill
            \begin{subfigure}[t]{0.3\textwidth}
                \centering
                \begin{tikzpicture}[rotate=180]
                    \node [style=Yellow Node] (0) at (0, 1) {};
                    \node [style=Black Node] (1) at (-1, 0) {};
                    \node [style=Black Node] (2) at (0, 0) {};
                    \node [style=Red Node] (3) at (0, 2) {};
                    \node [style=Black Node] (8) at (1, 2) {};
                    \node [style=Blue Node] (4) at (-1, -1) {};
                    \node [style=Blue Node] (5) at (0, -1) {};
                    \node [style=Blue Node] (6) at (1, -1) {};
                    \node [style=Black Node] (7) at (1, 0) {};
                    \draw [style=Black Edge] (4) to (1);
                    \draw [style=Black Edge] (1) to (0);
                    \draw [style=Black Edge] (0) to (3);
                    \draw [style=Black Edge] (0) to (2);
                    \draw [style=Black Edge] (2) to (5);
                    \draw [style=Black Edge] (0) to (7);
                    \draw [style=Black Edge] (7) to (6);
                    \draw [style=Black Edge] (3) to (8);
                \end{tikzpicture}
                \caption{$\forka{1}{3}$}
            \end{subfigure}
            \hfill
            \begin{subfigure}[t]{0.4\textwidth}
                \centering
                \begin{tikzpicture}[rotate=180]
                    \node [style=Yellow Node] (0) at (0, 1) {};
                    \node [style=Black Node] (1) at (-1.5, 0) {};
                    \node [style=Black Node] (2) at (-0.5, 0) {};
                    \node [style=Red Node] (3) at (0, 2) {};
                    \node [style=Black Node] (10) at (1, 2) {};
                    \node [style=Black Node] (11) at (2, 2) {};
                    \node [style=Blue Node] (4) at (-1.5, -1) {};
                    \node [style=Blue Node] (5) at (-0.5, -1) {};
                    \node [style=Blue Node] (6) at (0.5, -1) {};
                    \node [style=Black Node] (7) at (0.5, 0) {};
                    \node [style=Blue Node] (8) at (1.5, -1) {};
                    \node [style=Black Node] (9) at (1.5, 0) {};
                    \draw [style=Black Edge] (4) to (1);
                    \draw [style=Black Edge] (1) to (0);
                    \draw [style=Black Edge] (0) to (3);
                    \draw [style=Black Edge] (0) to (2);
                    \draw [style=Black Edge] (2) to (5);
                    \draw [style=Black Edge] (0) to (7);
                    \draw [style=Black Edge] (7) to (6);
                    \draw [style=Black Edge] (9) to (8);
                    \draw [style=Black Edge] (9) to (0);
                    \draw [style=Black Edge] (10) to (11);
                    \draw [style=Black Edge] (3) to (10);
                \end{tikzpicture}
                \caption{$\forka{2}{4}$}
            \end{subfigure}
            \begin{subfigure}[t]{0.2\textwidth}
                \centering
                \begin{tikzpicture}[rotate=180]
                    \node [style=Yellow Node] (0) at (0, 1) {};
                    \node [style=Red Node] (3) at (0, 2) {};
                    \node [style=Black Node] (14) at (0, 0) {};
                    \node [style=Blue Node] (18) at (-1, -1) {};
                    \node [style=Blue Node] (19) at (1, -1) {};
                    \draw [style=Black Edge] (0) to (3);
                    \draw [style=Black Edge] (18) to (14);
                    \draw [style=Black Edge] (19) to (14);
                    \draw [style=Black Edge] (0) to (18);
                    \draw [style=Black Edge] (0) to (19);
                    \node [style=Black Node] (22) at (.75, 2) {};
                    \node [style=Black Node] (23) at (1.5, 2) {};
                    \draw [style=Black Edge] (3) to (22);
                    \draw [style=Black Edge] (22) to (23);
                \end{tikzpicture}
                \caption{$\spoona{2}{2}$}
            \end{subfigure}
            \hfill
            \begin{subfigure}[t]{0.3\textwidth}
                \centering
                \begin{tikzpicture}[rotate=180]
                    \node [style=Yellow Node] (0) at (0, 1) {};
                    \node [style=Black Node] (1) at (-1, 0) {};
                    \node [style=Black Node] (2) at (0, 0) {};
                    \node [style=Red Node] (3) at (0, 2) {};
                    \node [style=Black Node] (21) at (1, 2) {};
                    \node [style=Black Node] (7) at (1, 0) {};
                    \node [style=Black Node] (10) at (-1.25, -1) {};
                    \node [style=Black Node] (11) at (0.5, -1) {};
                    \node [style=Black Node] (12) at (2, -1) {};
                    \node [style=Black Node] (14) at (-2, -1) {};
                    \node [style=Black Node] (15) at (-0.5, -1) {};
                    \node [style=Black Node] (16) at (1.25, -1) {};
                    \node [style=Blue Node] (18) at (-2, -2) {};
                    \node [style=Blue Node] (19) at (0, -2) {};
                    \node [style=Blue Node] (20) at (1.75, -2) {};
                    \draw [style=Black Edge] (1) to (0);
                    \draw [style=Black Edge] (0) to (3);
                    \draw [style=Black Edge] (0) to (2);
                    \draw [style=Black Edge] (0) to (7);
                    \draw [style=Black Edge] (14) to (1);
                    \draw [style=Black Edge] (10) to (2);
                    \draw [style=Black Edge] (15) to (1);
                    \draw [style=Black Edge] (11) to (7);
                    \draw [style=Black Edge] (16) to (2);
                    \draw [style=Black Edge] (12) to (20);
                    \draw [style=Black Edge] (20) to (16);
                    \draw [style=Black Edge] (11) to (19);
                    \draw [style=Black Edge] (19) to (15);
                    \draw [style=Black Edge] (10) to (18);
                    \draw [style=Black Edge] (18) to (14);
                    \draw [style=Black Edge] (12) to (7);
                    \draw [style=Black Edge] (21) to (3);
                \end{tikzpicture}
                \caption{$\spoona{1}{3}$}
            \end{subfigure}
            \hfill
            \begin{subfigure}[t]{0.4\textwidth}
                \centering
                \begin{tikzpicture}[rotate=180]
                    \node [style=Blue Node] (0) at (2.5, 0) {};
                    \node [style=Blue Node] (1) at (1, 0) {};
                    \node [style=Blue Node] (2) at (-1, 0) {};
                    \node [style=Blue Node] (3) at (-2.5, 0) {};
                    \node [style=Black Node] (4) at (-1.5, 2.5) {};
                    \node [style=Black Node] (5) at (-0.5, 2.75) {};
                    \node [style=Black Node] (6) at (0.5, 2.75) {};
                    \node [style=Black Node] (7) at (1.5, 2.5) {};
                    \node [style=Yellow Node] (8) at (0, 3.5) {};
                    \node [style=Red Node] (9) at (0, 4.25) {};
                    \node [style=Black Node] (10) at (2.5, 0.75) {};
                    \node [style=Black Node] (11) at (3, 0.75) {};
                    \node [style=Black Node] (12) at (2, 0.75) {};
                    \node [style=Black Node] (13) at (1, 0.75) {};
                    \node [style=Black Node] (14) at (1.5, 0.75) {};
                    \node [style=Black Node] (15) at (0.5, 0.75) {};
                    \node [style=Black Node] (16) at (-1, 0.75) {};
                    \node [style=Black Node] (17) at (-0.5, 0.75) {};
                    \node [style=Black Node] (18) at (-1.5, 0.75) {};
                    \node [style=Black Node] (19) at (-2.5, 0.75) {};
                    \node [style=Black Node] (20) at (-2, 0.75) {};
                    \node [style=Black Node] (21) at (-3, 0.75) {};
                    \draw [style=Black Edge] (9) to (8);
                    \draw [style=Black Edge] (8) to (5);
                    \draw [style=Black Edge] (8) to (4);
                    \draw [style=Black Edge] (8) to (6);
                    \draw [style=Black Edge] (8) to (7);
                    \draw [style=Black Edge] (0) to (11);
                    \draw [style=Black Edge] (10) to (0);
                    \draw [style=Black Edge] (12) to (0);
                    \draw [style=Black Edge] (15) to (1);
                    \draw [style=Black Edge] (13) to (1);
                    \draw [style=Black Edge] (14) to (1);
                    \draw [style=Black Edge] (18) to (2);
                    \draw [style=Black Edge] (16) to (2);
                    \draw [style=Black Edge] (17) to (2);
                    \draw [style=Black Edge] (21) to (3);
                    \draw [style=Black Edge] (19) to (3);
                    \draw [style=Black Edge] (20) to (3);
                    \draw [style=Black Edge] (20) to (6);
                    \draw [style=Black Edge] (19) to (5);
                    \draw [style=Black Edge] (21) to (4);
                    \draw [style=Black Edge] (17) to (7);
                    \draw [style=Black Edge] (16) to (5);
                    \draw [style=Black Edge] (18) to (4);
                    \draw [style=Black Edge] (14) to (7);
                    \draw [style=Black Edge] (13) to (6);
                    \draw [style=Black Edge] (15) to (4);
                    \draw [style=Black Edge] (11) to (7);
                    \draw [style=Black Edge] (6) to (10);
                    \draw [style=Black Edge] (12) to (5);
                \end{tikzpicture}
                \caption{$\spoona{0}{4}$}
            \end{subfigure}
            \caption{Both gadget types with 2, 3, and 4 affix vertices and varying appended path lengths. Blue vertices are affix vertices, red vertices are the path heads, and yellow vertices are the connection vertices.}\label{figure gadgets}
        \end{figure}
        \begin{construction}[Construction of a $\forka{\ell}{a}$-gadget for $a\geq 2$]\label{construction k1 gadget}
            This gadget consists of a star $K_{1,a+1}$ where $a$ of the edges are subdivided.
            The $a$ degree one vertices adjacent to degree two vertices will all be affix vertices, and the remaining degree one vertex will be the path head.
            The center of the star will be the connection vertex.
            A path with $\ell$ vertices is appended to the path head vertex.
        \end{construction}

        \begin{construction}[Construction of a $\spoona{\ell}{2}$-gadget]\label{construction 22 gadget}
            Start with a $C_4$ and add a leaf to any vertex. 
            The two vertices in the cycle which are adjacent to the degree $3$ vertex will be affix vertices. The degree $3$ vertex will be the connection vertex, and
            the degree $1$ vertex will be the path head.
            A path with $\ell$ vertices is appended to the path head vertex.
        \end{construction}
        
        \begin{construction}[Construction of a $\spoona{\ell}{a}$-gadget for $a\geq 3$]\label{construction spoon}
            Start with a set of vertices $A=\{w_1,w_2,\dots,w_a\}$.
            Then let $B=\binom{A}{a-1}$ consist of the set of $(a-1)$-element subsets of $A$.
            Construct a bipartite graph with parts $A$ and $B$, where $w\in A$ is adjacent to $b\in B$ if $w\in b$.
            Subdivide each edge in this bipartite graph.
            Add a vertex $x$ so that $x$ is adjacent to all $b\in B$.
            Add vertex $v$ adjacent to $x$. The vertex $x$ will be the connection vertex, the set $A$ will be the collection of affix vertices, and $v$ will be the path head vertex.
            A path with $\ell$ vertices is appended to the path head vertex.
        \end{construction}

        When we wish to reference a gadget without specifying whether it is a $\spoon{}{}$ or $\fork{}{}$-type gadget, we will write $\gadgeta{\ell}{a}$.
        We can now use gadgets in order to define the graph operation that will be used to construct a graph $G'$ from a given graph $G$.
    
        \begin{construction}[Affix Gadget Operation, $G' = \glueg{A}{\gadgeta{\ell}{|A|}}$]\label{construction affix gadget operation}
            Given a graph $G$, $A\subseteq V(G)$, and a gadget $\gadgeta{\ell}{|A|}$ we construct the graph $G' = G\boxminus_A \gadgeta{\ell}{|A|}$ by identifying each $w\in A$ with a unique affix vertex of $\gadgeta{\ell}{|A|}$. We say that the gadget $\gadgeta{\ell}{|A|}$ has been \emph{affixed} to the graph $G$ to create the graph $G'$.
        \end{construction}
        With PMUs located at affix vertices, the only way for the path head of any gadget to become observed is via the zero forcing step.
        It follows that the probability of observing any vertex in the appended path is the same as the probability of observing the path head.
        We formalize the probability of the path head vertex becoming observed in the following lemma.
        
        \begin{lemma}\label{lemma probability of path head being observed}
            Fix $a\geq 2$.
            Let $G$ be a graph and $S\subseteq V(G)$ be a set of vertices with $|S|\geq a$.
            Let $f:S\to \mathbb{N}$, let $M=M(S,f)$ be a multiset and let $A\subseteq S$ be a set of size $a$ such that every vertex in $A$ is adjacent to two leaves, both of which are not in $S$.
            Let $\gadgeta{\ell}{a}$ be a gadget with path head $v$.
            If $M$ is an initial placement of sensors which fail with probability $q$, then in $G' := \glueg{A}{\gadgeta{\ell}{a}}$ we have that
            \[
                \mathrm{Pr}(v\text{ is observed})=
                \begin{cases}
                    \displaystyle\prod_{w\in A}\left(1-q^{f(w)}\right) & \text{ if }\gadgeta{\ell}{a}=\forka{\ell}{a}, a\geq 2\\
                        &\\
                    1-q^{f(A)} & \text{ if } \gadgeta{\ell}{a}=\spoona{\ell}{2}\\
                        &\\
                    \displaystyle\sum_{w\in A} \left(q^{f(w)}\hspace{-6pt}\displaystyle\prod_{u\in A\setminus \{w\}}\left(1-q^{f(u)}\right)\right) +\displaystyle\prod_{w\in A}\left(1-q^{f(w)}\right) & \text{ if }\gadgeta{\ell}{a}=\spoona{\ell}{a}, a\geq 3

                \end{cases}
            \]
            Moreover, for any vertex $v'$ in the path appended to the path head $v$, we have that $\mathrm{Pr}(v'\text{ is observed}) = \mathrm{Pr}(v\text{ is observed})$.
        \end{lemma}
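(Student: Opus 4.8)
The plan is to show that, thanks to the two-leaf hypothesis, the fate of the gadget's path head $v$ decouples completely from the host graph $G$, so that $\mathrm{Pr}(v\text{ is observed})$ depends only on which of the affix sensors in $A$ survive. I would begin by recording the random set $T\subseteq A$ of those $w\in A$ whose sensor succeeds; since at least one of the $f(w)$ copies must survive, these events are independent across $A$ with $\mathrm{Pr}(w\in T)=1-q^{f(w)}$. The heart of the argument is the decoupling claim: \emph{an affix vertex whose sensor fails can never force any of its neighbors inside the gadget $\gadgeta{\ell}{a}$}. Indeed, each $w\in A$ carries two leaves outside $S$; these leaves can be observed only by $w$ forcing them, and $w$ cannot force while both remain unobserved, so if the sensor at $w$ fails, both leaves stay unobserved for the entire process and $w$ always retains at least two unobserved neighbors. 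Hence a failing affix vertex never forces at all, and in particular never forces inward.

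Using this, I would argue that the set of observed gadget-interior vertices (the connection vertex, the subdivision vertices, the $b$-vertices, the path head $v$, and the appended path) coincides with the set observed by an \emph{internal} forcing process run on $\gadgeta{\ell}{a}$ alone, in which a succeeding affix vertex contributes exactly the gadget-vertices of its closed neighborhood, delivered by the domination step, and a failing affix vertex contributes nothing and never forces. The inclusion from the internal process into the process on $G'=\glueg{A}{\gadgeta{\ell}{a}}$ is immediate, since interior vertices have identical neighborhoods in the gadget and in $G'$. For the reverse inclusion, the only edges joining the interior to $G$ pass through the affix vertices, and by the decoupling claim a failing affix never transmits a force inward, while a succeeding affix has already observed its incident subdivision vertices during the domination step; thus no genuinely new interior observation is ever produced from outside. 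This localizes the whole computation to a fixed finite graph and removes all dependence on $G$.

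With the problem localized, I would determine for each gadget the exact condition on $T$ under which $v$ becomes observed. For the forward direction I exhibit an explicit forcing sequence: for $\forka{\ell}{a}$ with $T=A$, every subdivision vertex is observed, one of them forces the connection vertex, and the connection vertex then forces $v$; for $\spoona{\ell}{2}$ a single success observes both the connection vertex and the opposite cycle vertex, after which $v$ is forced; and for $\spoona{\ell}{a}$ with $a\geq 3$ a short case analysis shows that any $T$ missing at most one affix vertex lets every $b$-vertex, then the connection vertex, then $v$ be forced. For the backward direction I exhibit a fort of $G'$ containing $v$ and disjoint from the domination set $N[S^*]$, so that by the fort fact of Bozeman et al.\ (no surviving sensor lies in the fort or its entrance) it is never entered: for $\forka{\ell}{a}$ with some $w_j$ failing, the set consisting of the subdivision vertex at $w_j$, the vertex $v$, and the appended path works; for $\spoona{\ell}{a}$ with two failing affix vertices $w_k,w_l$, the set consisting of the connection vertex, $v$, the appended path, and all subdivision vertices incident to $w_k$ or $w_l$ is a fort, since each $b$-vertex meets it in at least two vertices. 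In every case the decoupling claim is exactly what certifies that the candidate set is a genuine fort disjoint from $N[S^*]$.

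Finally I translate these conditions into probabilities using independence: $T=A$ has probability $\prod_{w\in A}(1-q^{f(w)})$; $T\neq\emptyset$ has probability $1-\prod_{w\in A}q^{f(w)}=1-q^{f(A)}$; and $\lv A\setminus T\rv\leq 1$ has probability $\prod_{w\in A}(1-q^{f(w)})+\sum_{w\in A}q^{f(w)}\prod_{u\in A\setminus\{w\}}(1-q^{f(u)})$, matching the three cases of the statement. The ``moreover'' is then immediate: the appended path is pendant at $v$ and carries no sensors, so a path vertex is observed precisely when the forcing chain emanating from $v$ reaches it, which occurs exactly when $v$ itself is observed. I expect the main obstacle to be making the decoupling step fully rigorous---proving that the process on $G'$ and the internal process on the gadget observe the same interior vertices---since this is where the two-leaf hypothesis does its work and is precisely what lets the fort arguments, which would otherwise be sensitive to the unknown structure of $G$, be carried out on a fixed graph.
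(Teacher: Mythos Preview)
Your approach is essentially the paper's: both hinge on the observation that the two pendant leaves at each affix vertex prevent a failing affix vertex from ever performing a zero-forcing step, which localizes the question of whether $v$ is observed entirely to which affix sensors survive. The paper then argues each gadget case directly, without invoking forts, while you formalize the backward direction via forts and the Bozeman et al.\ result; this is a stylistic rather than substantive difference.

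One small correction: your proposed fort for the $\forka{\ell}{a}$ case is not a fort as written. If $w_j$ fails and $F$ consists of the subdivision vertex $s_j$ at $w_j$, the path head $v$, and the appended path, then $w_j\notin F$ has exactly one neighbor in $F$ (namely $s_j$), so the fort condition fails at $w_j$. The fix is to adjoin the two leaves of $w_j$ to $F$: then $w_j$ has three $F$-neighbors, the connection vertex still has two ($s_j$ and $v$), and the entrance of $F$ is $\{w_j,x\}$, neither of which lies in $S^*$. This is presumably what you meant by saying the decoupling claim ``certifies'' the fort, but the leaves must be included explicitly for the fort definition to hold.
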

        
        \begin{proof}
            We note that in $G'$, each vertex in $A$ is adjacent to at least two leaves which are not in $S$, and so no vertices in $A$ could ever observe vertices via the zero forcing step alone.
            As such, the only way any vertex in $V(\gadgeta{\ell}{a})\setminus A$ can become observed is if sensors placed at vertices in $A$ avoid failure.
            We consider the following cases based on the type of gadget that $\gadgeta{\ell}{a}$ is.
    
            \noindent\textbf{Case 1:} $\gadgeta{\ell}{a}=\forka{\ell}{a}, a\geq 2$.
            
            For $v$ to be observed, it must be forced by the connection vertex and for this to happen, every vertex in $A$ must have at least one of its sensors from $M$ succeed, giving us the desired probability. 
            
            \noindent\textbf{Case 2:} $\gadgeta{\ell}{a}=\spoona{\ell}{2}$.
            
            If at least one sensor from $A$ succeeds, the entire gadget is observed, while if no sensors on $A$ succeed, $v$ is not observed.
            The probability that at least one vertex in $A$ has at least one sensor succeed is $1-q^{f(A)}$.
     
            \noindent\textbf{Case 3:} $\gadgeta{\ell}{a}=\spoona{\ell}{a}, a\geq 3$.
            
            Let $x$ be the connection vertex in $\spoona{\ell}{a}$.
            We claim that $x$ is observed if and only if at least $a-1$ vertices in $A$ have at least one sensor from $M$ succeed.
            Let $B$ be as in Construction~\ref{construction spoon}.
            Note that since every vertex in $B$ has degree $a$, the only way any of these vertices could perform a zero forcing step is if at least $a-1$ vertices in $A$ succeed, so $v$ is not observed if this does not happen.
            On the other hand, if at least $a-1$ vertices in $A$ succeed, say $w_1,\dots,w_{a-1}\in A$ all succeed, then the vertex $\{w_1,\dots,w_{a-1}\}\in B$ is able to force $x$.
            Additionally, every vertex in $B$ is observed since any $(a-1)$-set of $A$ has non-empty intersection with $\{w_1,\dots,w_{a-1}\}$, and so $x$ will force $v$ in this case, completing the proof of our claim.
        
            Thus, the probability that $v$ is observed is the probability that at least $a-1$ vertices in $A$ succeed.
            The expression $\displaystyle\sum_{w\in A} \left(q^{f(w)}\prod_{u\in A\setminus \{w\}}\left(1-q^{f(u)}\right)\right)$ gives the probability that exactly $a-1$ vertices in $A$ succeed. We could also have all $a$ vertices succeed, which occurs with probability  $\displaystyle\prod_{w\in A}\left(1-q^{f(w)}\right)$.
            These events are mutually exclusive and exhaustive, giving us the claimed probability.
    
            Now that we have established the probability that $v$ is observed, we note that the only way $v$ becomes observed is if the connection vertex forces $v$.
            In this case, the zero forcing process proceeds along the appended path until the entire appended path is observed, giving the moreover statement.
        \end{proof}
    
    \subsection{Controlling Coefficients}
        Given a graph $G$ with sensors at $S\subseteq V(G)$, we cannot hope to completely control every coefficient of $\expolq{G}{S}$.
        For example, we know that $\expolq{G}{S}[q^0]=|\obs{G}{S}|>0$, so we could not add structure to $G$ to cause the constant coefficient to become negative. Thus, we need to specify exactly which coefficients we can control.
        Towards this end, we define the set $\mathfrak{R}_2$ below, which we will show is the set of powers $k$ for which we can fully control the coefficient of $q^k$.
         
         Let $G$ be a graph, $S\subseteq V(G)$ and $f:S\to \mathbb{N}$. Let $M=M(S,f)$ be a multiset. Let 
         \[
            \mathfrak{R}_2:=\{f(S') \setgivensymbol S'\subseteq S, |S'|\geq 2\}.
         \] 
         That is, $\mathfrak{R}_2$ will consist of the set of multiplicities of sub\emph{sets} of at least 2 vertices for an initial PMU multiset placement $S$.
         For instance, with an initial PMU multiset placement with $S = \{a,b,c\}$ and $f: a\to 4,\, b \to 1, \, c\to 2$, we have $f(\{a,b\})=5$, $f(\{a,c\})=6$, $f(\{b,c\})=3$, and $f(\{a,b,c\})=7$. Thus gives us that $\mathfrak{R}_2=\{3,5,6,7\}$.
             
        We are now able to prove the following multiset generalization of Theorem~\ref{theorem induced any polynomial}, which shows that we can modify $G$ using gadgets to control the coefficient of $q^k$ for all $k\in \mathfrak{R}_2$.
        \begin{theorem}\label{theorem multiset induced any polynomial}
            Let $G$ be a graph, $S\subseteq V(G)$ and $f:S\to \mathbb{N}$.
            Let $M=M(S,f)$ be a multiset.
            Let $\mathfrak{R}_2:=\{f(S') \setgivensymbol S'\subseteq S, |S'|\geq 2\}$.
            For any collection of integers $\{c_i \setgivensymbol i\in \mathfrak{R}_2\}$, there exists a graph $G'$ such that
            \begin{equation}\label{equation desired coeffciient}
                \expolq{G'}{M}[q^k]=c_k
            \end{equation}
            for all $k\in \mathfrak{R}_2$ and $G$ is an induced subgraph of $G'$.
        \end{theorem}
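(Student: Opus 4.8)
The plan is to build $G'$ from $G$ by first guaranteeing the hypothesis of \cref{lemma probability of path head being observed} and then affixing gadgets one coefficient at a time, working from the top degree downward. If $|S|\le 1$ then $\mathfrak{R}_2=\emptyset$ and $G'=G$ works, so assume $|S|\ge 2$. First I would attach two pendant vertices to every vertex of $S$, obtaining a graph $G_0$ that still has $G$ as an induced subgraph and has the same $S$, $f$, and $\mathfrak{R}_2$. This ensures that every vertex of $S$ is adjacent to two leaves outside $S$, so that \cref{lemma probability of path head being observed} applies to the affixing of any gadget $\gadgeta{\ell}{a}$ at any $A\subseteq S$ with $|A|=a\ge 2$.

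The key computation is the effect on $\expolq{G'}{M}$ of affixing a single gadget at a set $A$ with $f(A)=k$. Using the vertex form \eqref{equation vertex view of E}, this effect is the sum of $\mathrm{Pr}(u\text{ observed})$ over the new vertices $u$ (the gadget interior together with the appended path); the two-leaf property prevents any force from crossing the attachment, so no pre-existing observation probability changes and distinct gadgets affixed at (possibly overlapping) subsets contribute additively. Each such probability is a polynomial in $q$ of degree at most $k=f(A)$, and by the moreover clause of \cref{lemma probability of path head being observed} the path head together with the $\ell$ appended vertices contribute $\ell+1$ copies of the path-head probability. Reading off the leading coefficient of the path-head probability in each case gives $(-1)^a$ for $\forka{\ell}{a}$, $-1$ for $\spoona{\ell}{2}$, and $(-1)^{a-1}(a-1)$ for $\spoona{\ell}{a}$ with $a\ge 3$. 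Thus affixing a gadget at $A$ adds a polynomial of degree exactly $k$ whose coefficient of $q^k$ grows linearly in the path length $\ell$, with slope of magnitude $1$ for a fork and of the opposite sign for a spoon.

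From this I would deduce that, by affixing one fork and one spoon at a chosen $A_k$ with $f(A_k)=k$ and $|A_k|\ge 2$, the net contribution to the coefficient of $q^k$ can be made equal to any prescribed integer: the fork supplies a step of $\pm 1$ (fine control in one direction) while the spoon moves coarsely in the opposite direction, so together they realize every integer. The construction then proceeds by induction on $\mathfrak{R}_2$ in decreasing order of $k$. Because a gadget affixed at level $k$ contributes a polynomial of degree exactly $k$, it leaves every coefficient of degree larger than $k$ untouched; hence the already-finalized higher coefficients are preserved, while the lower coefficients (which are either smaller elements of $\mathfrak{R}_2$, handled later, or the uncontrolled constant and linear terms) may be disturbed freely. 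At stage $k$ I read off the current coefficient $\gamma_k$ of $q^k$ and choose the two path lengths so that the added degree-$k$ contribution equals $c_k-\gamma_k$. Since only leaves and externally attached gadget vertices are ever added, $G$ remains an induced subgraph of the final graph $G'$.

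The main obstacle is the non-interference analysis underlying the additivity claim: one must verify, using the two-leaf property and \cref{lemma probability of path head being observed}, that affixing a gadget neither alters the observation probability of any vertex already present nor creates any interaction between different gadgets sharing affix vertices, and that the resulting contribution has degree exactly $k$ with the stated leading coefficient. The second delicate point is the reachability argument---that the opposite-signed fork and spoon slopes, one of which has magnitude $1$, suffice to hit every integer target at each level---which is what forces us to use both gadget families rather than just one.
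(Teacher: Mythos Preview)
Your proposal is correct and follows essentially the same route as the paper: add pendant leaves to each sensor vertex so that \cref{lemma probability of path head being observed} applies, affix one fork and one spoon at a witness set $A_k\subseteq S$ with $f(A_k)=k$ for each $k\in\mathfrak{R}_2$, and exploit the opposite-signed leading coefficients (magnitude $1$ for the fork, $|A_k|-1$ or $1$ for the spoon) to hit any integer target at level $k$, processing the elements of $\mathfrak{R}_2$ in decreasing order so that already-fixed higher coefficients are untouched. The only cosmetic difference is that the paper affixes all gadgets at once with symbolic path lengths $\psi_i,\phi_i$, splits $\expolq{G'}{M}$ into a path-length-independent piece $g(q)$ plus the path contributions $h_i(q)$, and then solves for the lengths top-down, whereas you phrase the same computation as a stage-by-stage induction.
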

        
        \begin{proof}
            For each $i\in \mathfrak{R}_2$, let $A_i\subseteq S$ be a set with $|A_i|\geq 2$ and $f(A_i)=i$, and let $\phi_i$ and $\psi_i$ be non-negative integers which we will specify later.
            For each $i$, affix the gadgets $\forka{\psi_i}{|A_i|}$ and $\spoona{\phi_i}{|A_i|}$ at $A_i$ as in \cref{construction affix gadget operation} (see Figure~\ref{fig:protofigone} for an example), and let us define $P_i^{\forka{}{}}$ and $P_i^{\spoona{}{}}$ to be the set of vertices in the appended path in $\forka{\psi_i}{|A_i|}$ and $\spoona{\phi_i}{|A_i|}$ respectively.
            In addition, if necessary, let us add leaves adjacent to the vertices in $S$ until every vertex in $S$ is adjacent to two leaves.
            Denote the resulting graph after affixing all gadgets and leaves by $G'$.
            Finally, define $P:=\bigcup_{i\in\mathfrak{R}_2}(P_i^{\forka{}{}}\cup P_i^{\spoona{}{}})$.
            \begin{figure}
                \centering
                \begin{subfigure}{.2\textwidth}
                    \centering
                    \begin{tikzpicture}
                        \node [style=Blue Node] (0) at (0, 0) {};
                        \node [style=Meta Node] (14) at (0.35, 0) {$u$};
                        \node [style=Blue Node] (1) at (1, 1) {};
                        \node [style=Meta Node] (15) at (1.35, 1) {$w$};
                        \node [style=Blue Node] (2) at (1, -1) {};
                        \node [style=Meta Node] (16) at (1.35, -1) {$v$};
                        \node [style=Black Node] (3) at (0.5, 2) {};
                        \node [style=Black Node] (4) at (1.5, 2) {};
                        \node [style=Black Node] (5) at (0.5, -2) {};
                        \node [style=Black Node] (6) at (1.5, -2) {};
                        \draw [style=Black Edge] (4) to (1);
                        \draw [style=Black Edge] (1) to (3);
                        \draw [style=Black Edge] (1) to (0);
                        \draw [style=Black Edge] (0) to (2);
                        \draw [style=Black Edge] (2) to (1);
                        \draw [style=Black Edge] (5) to (2);
                        \draw [style=Black Edge] (2) to (6);
                    \end{tikzpicture}
                    \caption{A Graph $G$ and set $S:=\{u,v,w\}$ indicated in blue.}
                    \label{fig:protofigonea}
                \end{subfigure}
                \hfill
                \begin{subfigure}{.75\textwidth}
                    \centering
                    \begin{tikzpicture}
                        \node [style=Meta Node] (-1) at (0.85, 0) {$u$};
                        \node [style=Meta Node] (-2) at (1, -1.35) {$v$};
                        \node [style=Meta Node] (-3) at (1, 1.35) {$w$};
                        \node [style=Meta Node] (-4) at (-4.5, 2) {};
                        \node [style=Meta Node] (-5) at (-4.5, -1.5) {};
                        \node [style=Meta Node] (-7) at (7, 2) {};
                        \node [style=Meta Node] (-8) at (-4, 2.5) {$\ell_1$};
                        \node [style=Meta Node] (-9) at (-4, -1) {$\ell_3$};
                        \node [style=Meta Node] (-10) at (4.5, -0.5) {$\ell_4$};
                        \node [style=Meta Node] (-11) at (6.5, 2.5) {$\ell_2$};
                        \node [style=Blue Node] (0) at (0.5, 0) {};
                        \node [style=Black Node] (3) at (0.5, -2) {};
                        \node [style=Black Node] (4) at (1.5, -2) {};
                        \node [style=Black Node] (5) at (1.5, 2) {};
                        \node [style=Black Node] (6) at (0.5, 2) {};
                        \node [style=Blue Node] (7) at (1, 1) {};
                        \node [style=Blue Node] (8) at (1, -1) {};
                        \node [style=Black Node] (9) at (-1.5, 3) {};
                        \node [style=Black Node] (10) at (-1.5, 2) {};
                        \node [style=Black Node] (11) at (-1.5, 1) {};
                        \node [style=Black Node] (12) at (-2.5, 2) {};
                        \node [style=Black Node] (14) at (-1.5, -1) {};
                        \node [style=Black Node] (15) at (-1.5, -2) {};
                        \node [style=Black Node] (16) at (-2.5, -1.5) {};
                        \node [style=Red Node] (17) at (-3.5, 2) {};
                        \node [style=Red Node] (18) at (-3.5, -1.5) {};
                        \node [style=Black Node] (19) at (3, 3.5) {};
                        \node [style=Black Node] (20) at (3, 3) {};
                        \node [style=Black Node] (21) at (3, 2.25) {};
                        \node [style=Black Node] (22) at (3, 1.75) {};
                        \node [style=Black Node] (23) at (3, 1) {};
                        \node [style=Black Node] (24) at (3, 0.5) {};
                        \node [style=Black Node] (25) at (4, 3.25) {};
                        \node [style=Black Node] (26) at (4, 2) {};
                        \node [style=Black Node] (27) at (4, 0.75) {};
                        \node [style=Black Node] (28) at (5, 2) {};
                        \node [style=Red Node] (29) at (6, 2) {};
                        \node [style=Black Node] (30) at (3, -1) {};
                        \node [style=Black Node] (31) at (3, -2) {};
                        \node [style=Black Node] (33) at (0.25, -0.75) {};
                        \node [style=Black Node] (34) at (0.25, 0.75) {};
                        \draw [style=Black Edge] (6) to (7);
                        \draw [style=Black Edge] (7) to (0);
                        \draw [style=Black Edge] (0) to (8);
                        \draw [style=Black Edge] (8) to (3);
                        \draw [style=Black Edge] (4) to (8);
                        \draw [style=Black Edge] (8) to (7);
                        \draw [style=Black Edge] (7) to (5);
                        \draw [style=Black Edge] (9) to (12);
                        \draw [style=Black Edge] (12) to (10);
                        \draw [style=Black Edge] (12) to (11);
                        \draw [style=Black Edge] (0) to (10);
                        \draw [style=Black Edge] (9) to (7);
                        \draw [style=Black Edge] (16) to (14);
                        \draw [style=Black Edge] (16) to (15);
                        \draw [style=Black Edge] (14) to (7);
                        \draw [style=Black Edge] (18) to (16);
                        \draw [style=Black Edge] (12) to (17);
                        \draw [style=Black Edge] (7) to (19);
                        \draw [style=Black Edge] (19) to (25);
                        \draw [style=Black Edge] (7) to (20);
                        \draw [style=Black Edge] (20) to (26);
                        \draw [style=Black Edge] (0) to (21);
                        \draw [style=Black Edge] (21) to (25);
                        \draw [style=Black Edge] (0) to (22);
                        \draw [style=Black Edge] (22) to (27);
                        \draw [style=Black Edge] (23) to (26);
                        \draw [style=Black Edge] (27) to (24);
                        \draw [style=Black Edge] (28) to (27);
                        \draw [style=Black Edge] (26) to (28);
                        \draw [style=Black Edge] (28) to (25);
                        \draw [style=Black Edge] (29) to (28);
                        \draw [style=Black Edge] (7) to (30);
                        \draw [style=Black Edge] (31) to (7);
                        \draw [style=Black Edge] (15) to (8);
                        \draw [style=Black Edge] (8) to (11);
                        \draw [style=Black Edge] (8) to (23);
                        \draw [style=Black Edge] (8) to (24);
                        \draw [style=Black Edge] (8) to (30);
                        \draw [style=Black Edge] (31) to (8);
                        \draw [style=Black Edge] (33) to (0);
                        \draw [style=Black Edge] (0) to (34);
                        \draw [style=Dashed Edge] (-4) to (17);
                        \draw [style=Dashed Edge] (-5) to (18);
                        \draw [style=Dashed Edge] (-7) to (29);
                        \node [style=Meta Node] (-6) at (5, -1) {};
                        \node [style=Red Node] (32) at (4, -1) {};
                        \draw [style=Dashed Edge] (-6) to (32);
                        \draw [style=Black Edge] (32) to (30);
                    \end{tikzpicture}
                    \caption{The Graph $G'$ resulting from affixing $\forka{\ell_1}{\{u,v,w\}}$, $\forka{\ell_3}{\{v,w\}}$, $\spoona{\ell_2}{\{u,v,w\}}$, and $\spoona{\ell_3}{\{v,w\}}$, along with appending two leaves to $u$. The set $S:=\{u,v,w\}$ indicated in blue and the path heads of all gadgets are indicated in red.}
                \end{subfigure}
                \caption{Graphs $G$ with $\expol{G}{S}{q}=-3q^3-4q+7$ and $G'$ with $\expol{G'}{S}{q}=(5-\ell_1+2\ell_2)q^3+(-21+3\ell_1-3\ell_2+\ell_3-\ell_4)q^2+(-16-3\ell_1-2\ell_3)q+(32+\ell_1+\ell_2+\ell_3+\ell_4)$. If we wanted to make e.g. $\expol{G'}{S}{q}[q^3]=\expol{G'}{S}{q}[q^2]=0$, we could do so by setting $\ell_1=5, \ell_2=0, \ell_3=6$, and $\ell_4=0$. This choice of path lengths results in $\expolq{G'}{S}=43(1-q)$ and the graph given in Figure~\ref{fig:3_2converse}.}
                \label{fig:protofigone}
            \end{figure}
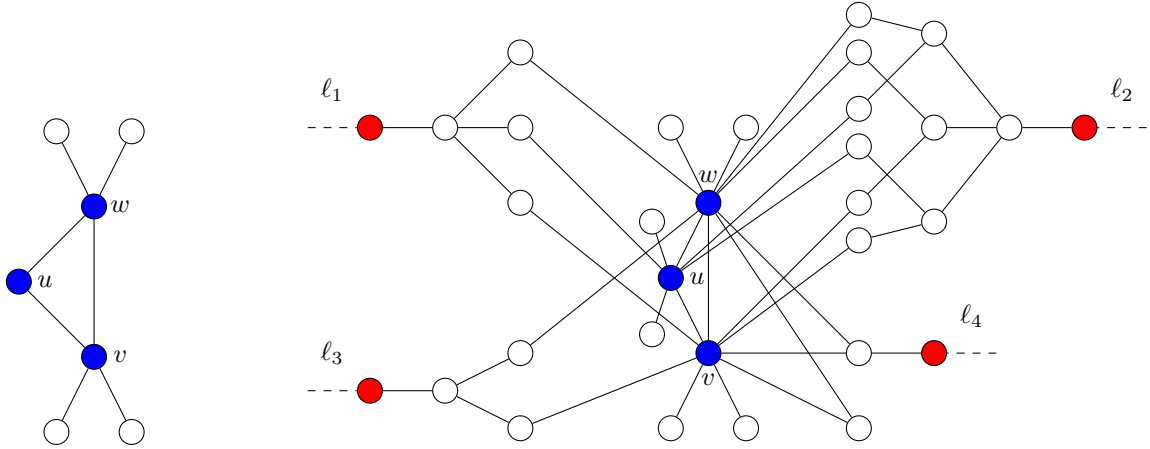
            
            Observe that
            \begin{equation}\label{equation gadgets proof vertex split}
                \expolq{G'}{S} = \sum_{v\in V(G')\setminus P} \Pr(v\text{ is observed}) + \sum_{v\in P}\Pr(v\text{ is observed}).
            \end{equation}
            Let us write $\displaystyle g(q):=\sum_{v\in V(G')\setminus P} \Pr(v\text{ is observed})$ and $ \displaystyle h_i(q):=\sum_{v\in P_i^{\spoona{}{}}\cup P_i^{\forka{}{}}}\Pr(v\text{ is observed})$, so that~\eqref{equation gadgets proof vertex split} becomes
            \begin{equation}\label{equation expressing the expected value polynomial in terms of path polynomials}
                \expolq{G'}{S}=g(q)+\sum_{i\in\mathfrak{R}_2} h_i(q).
            \end{equation}
            Let us explore the sum in~\eqref{equation expressing the expected value polynomial in terms of path polynomials} more carefully.
            For all $i\in\mathfrak{R}_2$, we claim that $h_i(q)$ is a polynomial in $q$ of degree at most $i$, and that for any integer $t$, there exists a choice of $\phi_i$ and $\psi_i$ such that $h_i(q)[q^i]=t$.
            Indeed, let $i\in\mathfrak{R}_2$, and let us consider cases based on the size of $A_i$.
        
            \noindent\textbf{Case 1:} $|A_i|=2$.
            Then by \cref{lemma probability of path head being observed} we have
            \begin{align*}
                h_i(q)&=\sum_{v\in P_i^{\forka{}{}}}\Pr(v\text{ is observed}) + \sum_{v\in P_i^{\spoona{}{}}}\Pr(v\text{ is observed})\\
                &= \psi_2 \left( \displaystyle\prod_{w\in A_i}\left(1-q^{f(w)}\right) \right) +\phi_2 \left(1-q^{f(A_i)} \right).
            \end{align*}
            The leading term for the left is $\psi_2q^{f(A_i)}$ and the leading term for the right is $-\phi_2q^{f(A_i)}$.
            Since we have $f(A_i)=i$ by our choice of $A_i$, $\mathrm{deg}(h_i)\leq i$.
            Furthermore, we have that
            \[
                t_i:=h_i(q)[q^i]=\psi_2-\phi_2.
            \]
            Note that as the difference of two nonnegative integers, we can select $\phi_2$ and $\psi_2$ so that $t_i$ is any integer.
            
            \noindent\textbf{Case 2:} $|A_i|\geq 3$.
            By \cref{lemma probability of path head being observed} we have
            \begin{align*}
                h_i(q)&=\sum_{v\in P_i^{\forka{}{}}}\Pr(v\text{ is observed}) + \sum_{v\in P_i^{\spoona{}{}}}\Pr(v\text{ is observed})\\
                &= \psi_i\left( \prod_{w\in A_i}\left(1-q^{f(w)}\right)\right) + \phi_i \left(\sum_{w\in A_i} \left(q^{f(w)}\hspace{-6pt}\displaystyle\prod_{u\in A_i\setminus \{w\}}\left(1-q^{f(u)}\right)\right) +\displaystyle\prod_{w\in A_i}\left(1-q^{f(w)}\right) \right).
            \end{align*}
            The leading term for the left is $\psi_i (-1)^{|A_i|} q^i$ and for the right we have $\phi_i (-1)^{|A_i|-1} (|A_i|-1)q^{i}$.
            Thus, we again find that $h_i(q)$ has degree at most $i$. Furthermore, the coefficient of $q^i$ is 
            \begin{align*}
                t_i:= h_i(q)[q^i]&=(-1)^{|A_i|}\psi_i + (-1)^{|A_i|-1}\phi_i(|A_i|-1)\\
                &=(-1)^{|A_i|}(\psi_i-\phi_i(|A_i|-1)).
            \end{align*}
            Thus, we can choose $\phi_i$ and $\psi_i$ so that $t_i$ is any integer. This completes the proof of our claim.

            Now, from~\eqref{equation expressing the expected value polynomial in terms of path polynomials}, we can see that for all $i\in\mathfrak{R}_2$,
                \begin{align}
                \expolq{G'}{S}[q^i]&=g[q^i]+\sum_{i'\in\mathfrak{R}_2,i'\geq i} h_{i'}[q^i]\nonumber\\
                &=g[q^i]+\sum_{i'\in\mathfrak{R}_2,i'> i} h_{i'}[q^i]+t_i\label{equation coefficients of expol}
            \end{align}
        
            We can now choose $\phi_i$ and $\psi_i$ for each $i\in\mathfrak{R}_2$.
            Let $\alpha=f(M)$ be the largest element of $\mathfrak{R}_2$.
            Then from~\eqref{equation coefficients of expol}, we have $\expolq{G'}{S}[q^\alpha] = g[q^\alpha] + t_\alpha$.
            Thus, we choose $\phi_\alpha$ and $\psi_\alpha$ so that $t_\alpha = c_\alpha - g[q^\alpha]$, and consequentially $\expolq{G}{S}[q^\alpha] = c_\alpha.$
        
            We then move to the next smallest element of $\mathfrak{R}_2$, say $\beta$.
            Then from~\eqref{equation coefficients of expol}, we have $\expolq{G'}{S}[q^\beta] = g[q^\beta] + h_{\alpha}[q^{\beta}]+ t_\beta$.
            Then we can choose $\phi_\beta$ and $\psi_\beta$ so that $t_\beta = c_\beta - g[q^\beta] -h_{\alpha}[q^{\beta}]$, giving us $\expolq{G'}{S}[q^\beta] = c_\beta$.
        
            Continuing in this fashion, let $i\in\mathfrak{R}_2$ us assume we have chosen $\phi_{i'}$ and $\psi_{i'}$ for all $i'\in\mathfrak{R}_2$, $i'>i$.
            With~\eqref{equation coefficients of expol} in mind, we choose $\phi_i$ and $\psi_i$ such that 
            \[
                t_i=c_i-g[q^i]-\sum_{i'\in\mathfrak{R}_2,i'> i} h_{i'}[q^i].
            \]
            We note here in particular that our choice of $\phi_i$ and $\psi_i$ does not depend on the value of any $\phi_j$ or $\psi_j$ with $j<i$, only on those values which we have already chosen.
        
            We may continue in this fashion, choosing $\phi_i$ and $\psi_i$ until we have $\expolq{G'}{S}[q^i]=c_i$ for all $i\in\mathfrak{R}_2$, as desired.
        
            Moreover, $G$ is an induced subgraph of $G'$, as the construction does not add any edges between the original vertices of $G$.            
        \end{proof}

        We can now prove \cref{theorem induced any polynomial} by restricting from multiset PMU placements to set PMU placements.
        \begin{theorem}\label{theorem induced any polynomial}
            Let $d\in\mathbb{N}$.
            Let $G$ be a graph and $S\subseteq V(G)$ with $|S|=d$.
            For any integers $c_d,c_{d-1},c_{d-2},\dots,c_2$, there exists a graph $G'$ with $G$ as an induced subgraph along with values $c_1$ and $c_0$ such that
            \begin{equation}\label{equation induced any polynomial}
                \expolq{G'}{S}=\sum_{k=0}^d c_kq^k.
            \end{equation}
        \end{theorem}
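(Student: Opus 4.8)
The plan is to derive this set statement as the special case of Theorem~\ref{theorem multiset induced any polynomial} in which every sensor is placed with multiplicity one. Concretely, I would take the multiplicity function $f:S\to\mathbb{N}$ defined by $f(s)=1$ for every $s\in S$, so that the multiset $M=M(S,f)$ is simply the set $S$ and hence $\expolq{G'}{M}=\expolq{G'}{S}$ for any graph $G'$ built over $S$. The whole point is that all the work of forcing the coefficients to take prescribed values has already been done in the multiset theorem, so here the task is purely one of specialization and bookkeeping.

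With this choice of $f$, every subset $S'\subseteq S$ satisfies $f(S')=|S'|$, and since $|S|=d$ the subsets $S'$ with $|S'|\geq 2$ realize exactly the integer sizes $2,3,\dots,d$. Hence $\mathfrak{R}_2=\{2,3,\dots,d\}$. Applying Theorem~\ref{theorem multiset induced any polynomial} to the prescribed integers $\{c_k \setgivensymbol 2\leq k\leq d\}$ then produces a graph $G'$ that contains $G$ as an induced subgraph and satisfies $\expolq{G'}{S}[q^k]=c_k$ for every $k$ with $2\leq k\leq d$.

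It then remains only to handle the two lowest-order coefficients and to confirm that no spurious higher-order terms appear. For the degree bound I would invoke~\eqref{equation set view of E} applied to $G'$: each summand $|\obs{G'}{W}|\,q^{|S\setminus W|}(1-q)^{|W|}$ is an integer polynomial of degree $|S\setminus W|+|W|=|S|=d$, so $\expolq{G'}{S}$ is an integer polynomial of degree at most $d$, and in particular all of its coefficients of $q^k$ for $k>d$ vanish. Setting $c_1:=\expolq{G'}{S}[q^1]$ and $c_0:=\expolq{G'}{S}[q^0]$ for the two coefficients the construction leaves uncontrolled then yields $\expolq{G'}{S}=\sum_{k=0}^d c_kq^k$, which is exactly~\eqref{equation induced any polynomial}.

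I do not expect any serious obstacle in this argument; the only steps requiring care are the bookkeeping identity $\mathfrak{R}_2=\{2,\dots,d\}$ under the unit multiplicity function, and the degree estimate from~\eqref{equation set view of E} guaranteeing that fixing the coefficients for $2\leq k\leq d$ cannot leave a leftover term of degree exceeding $d$. Everything else is an immediate consequence of the multiset theorem.
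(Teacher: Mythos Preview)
Your proposal is correct and follows essentially the same approach as the paper: specialize Theorem~\ref{theorem multiset induced any polynomial} with the unit multiplicity function $f\equiv 1$, so that $\mathfrak{R}_2=\{2,\dots,d\}$, and then let $c_0,c_1$ be whatever the construction happens to produce. Your added remarks about the degree bound via~\eqref{equation set view of E} make explicit a point the paper leaves implicit, but the strategy is identical.
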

    
        \begin{proof}
            We apply Theorem~\ref{theorem multiset induced any polynomial} to $G$ and $S$, with $f(v)=1$ for all $v\in S$, noting that $\mathfrak{R}_2=\{k\in \mathbb{N} \setgivensymbol 1 < k \leq |S|\}$.
            The graph $G'$ satisfies the desired conditions.
        \end{proof}
    
        Theorems~\ref{theorem induced any polynomial} and \ref{theorem multiset induced any polynomial} only claim we can control the coefficients of powers that appear in $\mathfrak{R}_2$.
        Our next result shows that while we do not have the same control for the remaining coefficients, we do have some knowledge of their form. Note that when $S$ is a set, the only coefficients not corresponding to elements of $\mathfrak{R_2}$ are the linear and constant coefficients.
        \begin{proposition}\label{proposition uncontrollable conefficients}
            Let $G$ be a graph, $S\subseteq V(G)$ and $f:S\to \mathbb{N}$, and let $d:=\sum_{s\in S}f(s)$, i.e. the total number of PMUs.
            Let $M=M(S,f)$ be a multiset.
            Let $\mathfrak{R}_1=\{f(S') \setgivensymbol S'\subseteq S, |S'|\geq 1\}$, and let $\mathfrak{R}_2=\{f(S') \setgivensymbol S'\subseteq S, |S'|\geq 2\}$.
            \begin{enumerate}
                \item
                    If $k\in [d]\setminus \mathfrak{R}_1$, then $\expolq{G}{M}[q^k]=0$.\label{item coefficient missing from polynomial}
                \item
                    If $k\in \mathfrak{R}_1\setminus \mathfrak{R}_2$, then $\expolq{G}{M}[q^k]\leq 0$.\label{item cannot control coefficient completely}
                \item
                    If $k=0$, then $\expolq{G}{M}[q^k]=\sizeobs{G}{S}\geq 0$.\label{item constant coefficient}
            \end{enumerate}
        \end{proposition}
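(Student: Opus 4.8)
The plan is to analyze each coefficient using the multiset form of the expected value polynomial given in~\eqref{equation multiset form of expol}, namely
\[
\mathcal{E}(G;M,q)=\sum_{S'\subseteq S}|\mathrm{Obs}(G;S')|\,q^{f(S\setminus S')}\prod_{s\in S'}(1-q^{f(s)}).
\]
The key observation is that each term indexed by $S'\subseteq S$ is a nonnegative integer multiple of $|\mathrm{Obs}(G;S')|$ times $q^{f(S\setminus S')}$ times the product $\prod_{s\in S'}(1-q^{f(s)})$. Expanding the product $\prod_{s\in S'}(1-q^{f(s)})$, the lowest-degree term is the constant $1$ and every other term has strictly positive degree; crucially, when we further multiply by $q^{f(S\setminus S')}$, the total power of $q$ in any monomial arising from the term indexed by $S'$ is $f(S\setminus S')$ plus a sum of $f(s)$ over some subset of $S'$, i.e.\ a value of the form $f(T)$ for some $T\subseteq S$ with $S\setminus S'\subseteq T$. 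Every power of $q$ that can appear anywhere in $\mathcal{E}(G;M,q)$ is therefore of the form $f(T)$ for some $T\subseteq S$, which is exactly the motivation for the definition of $\mathfrak{R}_1$.

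First I would establish~\ref{item coefficient missing from polynomial}. If $k\in[d]\setminus\mathfrak{R}_1$, then $k$ is not equal to $f(T)$ for any nonempty $T\subseteq S$, and also $k\neq 0$ since $k\in[d]$ means $k\geq 1$; by the paragraph above, no monomial in the expansion contributes to $q^k$, so the coefficient is $0$. Next, for~\ref{item constant coefficient}, I would extract the constant term by setting $q=0$: the only surviving term in the sum is the one with $f(S\setminus S')=0$, which forces $S\setminus S'=\emptyset$, i.e.\ $S'=S$ (recalling $f(s)\geq 1$ for all $s$ in the support), and the product $\prod_{s\in S}(1-0)=1$, leaving exactly $|\mathrm{Obs}(G;S)|=\sizeobs{G}{S}\geq 0$.

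The main content is part~\ref{item cannot control coefficient completely}, asserting that the coefficient is nonpositive when $k\in\mathfrak{R}_1\setminus\mathfrak{R}_2$. Here the condition $k\in\mathfrak{R}_1\setminus\mathfrak{R}_2$ means $k=f(T)$ is achievable only by \emph{singleton} sets $T$, never by a set of size at least $2$. I would argue via the vertex-probability form~\eqref{equation vertex view of E}, writing $\expolq{G}{M}=\sum_{v\in V(G)}\Pr(v\text{ is observed})$, and then examine how $\Pr(v\text{ is observed})$ contributes to the coefficient of $q^k$. Alternatively, and perhaps more directly, I would track which monomials in~\eqref{equation multiset form of expol} land in degree $k$: a monomial from the $S'$-term has degree $f(S\setminus S')+f(U)$ for some $U\subseteq S'$, with sign $(-1)^{|U|}$, so it lands in degree $k$ exactly when $f((S\setminus S')\cup U)=k$ with $(S\setminus S')\cup U$ ranging over subsets $T$ with a prescribed intersection pattern. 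The sign attached is $(-1)^{|U|}=(-1)^{|T|-|S\setminus S'|}$. Since $k\in\mathfrak{R}_1\setminus\mathfrak{R}_2$, every such $T$ with $f(T)=k$ must be a singleton, which pins down $|U|$ and the sign, and forces the contributions to be nonpositive.

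The hard part will be bookkeeping the signs in part~\ref{item cannot control coefficient completely}: one must show that all the monomials contributing to $q^k$ carry the same (nonpositive) sign and that no cancellation produces a positive coefficient. I expect the clean way to handle this is to fix the singleton $\{s\}$ with $f(s)=k$, observe that a contribution to $q^k$ of degree exactly $k$ requires the selected set $T=(S\setminus S')\cup U$ to equal $\{s\}$, and then check that the available $(S',U)$ decompositions of $\{s\}$ all yield the same sign $(-1)^{\pm 1}$ after accounting for $|S\setminus S'|$ and $|U|$; summing over $S'$ and over vertices $v$ then gives a sum of terms each of which is a nonpositive integer multiple of some $\sizeobs{G}{\cdot}\geq 0$. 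I would present this as the crux, with parts~\ref{item coefficient missing from polynomial} and~\ref{item constant coefficient} following as short corollaries of the degree-tracking setup.
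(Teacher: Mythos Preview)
Your setup and the arguments for parts~\ref{item coefficient missing from polynomial} and~\ref{item constant coefficient} are correct and match the paper's approach essentially verbatim. The monomial-tracking for part~\ref{item cannot control coefficient completely} is also the right start: contributions to $q^k$ come from pairs $(S',U)$ with $U\subseteq S'$ and $f\bigl((S\setminus S')\cup U\bigr)=k$, and since $k\notin\mathfrak{R}_2$ the set $(S\setminus S')\cup U$ must be a singleton $\{s\}$ with $f(s)=k$.

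The gap is in your expectation that ``all the monomials contributing to $q^k$ carry the same (nonpositive) sign.'' They do not. For each such $s$ there are exactly two decompositions of $\{s\}$ into disjoint pieces $S\setminus S'$ and $U$: either $(S',U)=(S,\{s\})$, giving sign $(-1)^{1}=-1$ and weight $\sizeobs{G}{S}$, or $(S',U)=(S\setminus\{s\},\emptyset)$, giving sign $(-1)^{0}=+1$ and weight $\sizeobs{G}{S\setminus\{s\}}$. These have \emph{opposite} signs, so a uniform-sign argument cannot work. The nonpositivity instead comes from monotonicity of the observation operator: the two contributions for each $s$ combine to $\sizeobs{G}{S\setminus\{s\}}-\sizeobs{G}{S}\le 0$, since $\obs{G}{S\setminus\{s\}}\subseteq\obs{G}{S}$. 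Summing over all $s$ with $f(s)=k$ yields the paper's expression
\[
\expolq{G}{M}[q^k]=\Bigl(\sum_{s:f(s)=k}\sizeobs{G}{S\setminus\{s\}}\Bigr)-\ell\,\sizeobs{G}{S}\le 0,
\]
where $\ell=|\{s\in S:f(s)=k\}|$. So the missing idea in your plan is this containment $\obs{G}{S\setminus\{s\}}\subseteq\obs{G}{S}$, not a sign observation.
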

    
        \begin{proof}
            Recalling~\eqref{equation multiset form of expol}, we have
            \begin{equation}\label{equation controlling coefficients multiset expected value polynomial}
                \expolq{G}{M}=\sum_{S'\subseteq S}\sizeobs{G}{S'}q^{f(S\setminus S')}\prod_{s\in S'}(1-q^{f(s)}).
            \end{equation}
            For~\ref{item coefficient missing from polynomial}, let $k\in [d] \setminus \mathfrak{R}_1$.
            We note that the only powers of $q$ that come from the expanded form of~\eqref{equation controlling coefficients multiset expected value polynomial} are powers which are sums of outputs of $f$.
            Since $k\not\in \mathfrak{R}_1$, this does not appear as an exponent.
            
            For~\ref{item cannot control coefficient completely}, let $k\in \mathfrak{R}_1\setminus \mathfrak{R}_2$, let $S^*=\{s \in S : f(s)=k\}$ so that $|S^*|=\ell \geq 1$.
            Since $k\not\in \mathfrak{R}_2$ and in the expanded form of~\eqref{equation controlling coefficients multiset expected value polynomial}, each power of $q$ is a sum of outputs of $f$, the only way to get a contribution to the coefficient of $q^k$ is if the sum of the outputs of $f$ is a $1$-term sum, corresponding to $s\in S$ with $f(s)=k$.
            Such contributions come from two types of terms from~\eqref{equation controlling coefficients multiset expected value polynomial},
            \begin{itemize}
                \item
                    The term with $S'=S$, $\displaystyle\sizeobs{G}{S}\prod_{s\in S}(1-q^{f(s)})$, when fully expanded contains $\ell$ terms with $q^k$, each with a coefficient of $-\sizeobs{G}{S}$, giving us a total contribution of $-\ell \sizeobs{G}{S}$.
                \item
                    For each $v\in S^*$, the term with $S'=S\setminus \{v\}$, $\displaystyle\sizeobs{G}{S\setminus\{v\}}q^{f(v)}\prod_{s\in S\setminus\{v\}}(1-q^{f(s)})$, when fully expanded contains a single term with $q^k$, with coefficient $\sizeobs{G}{S\setminus\{v\}}$. 
            \end{itemize}
            When adding these contributions up, we find the coefficient of $q^k$ to be
            \[
                \left(\sum_{v\in S^*}\sizeobs{G}{S\setminus\{v\}}\right)-\ell\sizeobs{G}{S}.
            \]
            Since the sum on the left has $\ell$ terms, and $\obs{G}{S\setminus\{v\}}\subseteq\obs{G}{S}$ for any $v\in S$, we have that the coefficient of $q^k$ is always non-positive.
            
            Finally, for~\ref{item constant coefficient}, we note that $\sizeobs{G}{S}=\expol{G}{M}{0}=\expolq{G}{M}[q^0]$.
        \end{proof}
        
        Proposition~\ref{proposition uncontrollable conefficients} shows that the coefficients which do not appear in Theorems~\ref{theorem induced any polynomial} or \ref{theorem multiset induced any polynomial} cannot be fully controlled by adding structure to a pre-existing graph.
        The coefficients corresponding to powers in $\mathfrak{R}_1$ can be made arbitrarily small (negative) by appending many leaves to the appropriate vertex in $S$.

\section{Characterizing the Expected Value Polynomial}\label{sec:linearity}

    In this section we will prove Theorem~\ref{theorem linear algebraic characterization of copolynimal graphs}, provide a counterexample to the converse of Theorem~\ref{theorem old linear} and prove Theorem~\ref{theorem linear}.
    Finally, we explore an extension of Theorem~\ref{theorem linear} to expected value polynomials of bounded degree higher than $1$.
    
    \begin{theorem}\label{theorem linear algebraic characterization of copolynimal graphs}
        Let $G$ and $G'$ be graphs, and let $S\subseteq V(G)$ and $S'\subseteq V(G')$ be such that $|S|=|S'|=:s$. The polynomials
        \[
            \expolq{G}{S}=\expol{G'}{S'}{q}
        \]
        if and only if for each $0\leq k\leq s$, 
        \[
            \sum_{W\in\binom{S}{k}}|\obs{G}{W}|=\sum_{W'\in\binom{S'}{k}}|\obs{G'}{W'}|.
        \]
    \end{theorem}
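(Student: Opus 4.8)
The plan is to rewrite both expected value polynomials in a common basis indexed by subset size, reducing the statement to the linear independence of a family of degree-$s$ polynomials. Starting from the set form~\eqref{equation set view of E}, I would group the terms according to $k:=|W|$. Since $|S\setminus W|=s-k$ whenever $|W|=k$, collecting subsets of each fixed size gives
\[
    \expolq{G}{S}=\sum_{k=0}^{s}\left(\sum_{W\in\binom{S}{k}}\sizeobs{G}{W}\right)q^{s-k}(1-q)^{k}.
\]
Writing $a_k:=\sum_{W\in\binom{S}{k}}\sizeobs{G}{W}$ and, analogously, $a_k':=\sum_{W'\in\binom{S'}{k}}\sizeobs{G'}{W'}$, this becomes $\expolq{G}{S}=\sum_{k=0}^{s}a_k\,q^{s-k}(1-q)^{k}$ and $\expol{G'}{S'}{q}=\sum_{k=0}^{s}a_k'\,q^{s-k}(1-q)^{k}$. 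The theorem is then precisely the claim that two such combinations agree as polynomials if and only if their coefficient sequences $(a_k)$ and $(a_k')$ coincide.

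The reverse implication is immediate: if $a_k=a_k'$ for every $0\le k\le s$, the two expressions are literally the same linear combination, hence equal as polynomials. (Note the $k=0$ case is automatic, since $\sizeobs{G}{\emptyset}=0=\sizeobs{G'}{\emptyset}$, which is why the overview version of the statement may safely restrict to $1\le k\le s$.) For the forward implication, suppose the two polynomials are equal; then $\sum_{k=0}^{s}(a_k-a_k')\,q^{s-k}(1-q)^{k}=0$ identically. To conclude $a_k=a_k'$ for all $k$ it suffices to show that the family $\{\,q^{s-k}(1-q)^{k}:0\le k\le s\,\}$ is linearly independent over $\mathbb{R}$, so that a vanishing linear combination forces all coefficients to be zero.

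The crux of the argument is therefore this linear independence, which I expect to be the only nontrivial step. I would prove it directly: given any identity $\sum_{k=0}^{s}b_k\,q^{s-k}(1-q)^{k}=0$ valid for all $q$, restrict to $q\in(0,1)$, divide by the nonzero quantity $q^{s}$, and substitute $u:=(1-q)/q$. This turns the identity into $\sum_{k=0}^{s}b_k u^{k}=0$, and as $q$ ranges over $(0,1)$ the variable $u$ ranges over all of $(0,\infty)$; a polynomial in $u$ vanishing on an interval is the zero polynomial, so every $b_k=0$. Equivalently, one may observe that $q^{s-k}(1-q)^{k}=\binom{s}{k}^{-1}B_{s-k,s}(q)$ is a nonzero scalar multiple of a Bernstein basis polynomial of degree $s$, and the Bernstein polynomials form a classical basis for the space of polynomials of degree at most $s$. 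Either route yields the independence, and combined with the regrouping above this settles both directions of the equivalence.
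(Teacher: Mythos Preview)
Your proposal is correct and follows essentially the same route as the paper: regroup the sum in~\eqref{equation set view of E} by $k=|W|$, write $\expolq{G}{S}=\sum_{k=0}^{s}a_k\,q^{s-k}(1-q)^{k}$, and observe that $\{q^{s-k}(1-q)^{k}:0\le k\le s\}$ is a basis (the paper simply cites the Bernstein basis via a footnote, whereas you supply the short substitution argument). The only cosmetic difference is that you make the trivial $k=0$ case explicit and spell out the independence proof, but the structure and key idea are identical.
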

    
    \begin{proof}
        For $0\leq k\leq s$, define
        \[
            a_k:=\sum_{W\in \binom{S}{k}}\sizeobs{G}{W}.
        \]
        We can write
        \begin{align*}
            \expolq{G}{S}&=\sum_{W\subseteq S}\sizeobs{G}{W}q^{|S\setminus W|}(1-q)^{|W|}\\
            &=\sum_{k=0}^s\sum_{W\in \binom{S}{k}}\sizeobs{G}{W}q^{s-k}(1-q)^{k}\\
            &=\sum_{k=0}^sa_kq^{s-k}(1-q)^{k}.
        \end{align*}
        The set $\{q^{s-k}(1-q)^{k} \setgivensymbol 0\leq k\leq s\}$ is a basis\footnote{The basis provided here is similar to the so-called Bernstein basis (see e.g.\cite{Bernstein1912}), differing only by constant multiples.} for the vector space of polynomials of degree at most $d$ over $q$. If we similarly define 
        \[
            a_k':=\sum_{W'\in \binom{S'}{k}}\sizeobs{G'}{W'},
        \]
        then the equation $\expolq{G}{S}=\expolq{G'}{S'}$
        is equivalent to $a_k=a_k'$ for all $0\leq k\leq s$.
    \end{proof}

    We next give an example which shows that the hypothesis of of the linearity condition given in \cite{BBFK2023} is not sufficient to guarantee a linear expected value polynomial. 
    
    \begin{theorem}[\cite{BBFK2023}, Proposition~3.2]\label{theorem old linear}
            Let $G$ be a graph, $S\subseteq V(G)$ be a set, and $q$ a probability of PMU failure. If for every $X\subseteq S$,
            \begin{equation}
                |\obs{G}{X}|=\sum_{v\in X}|\obs{G}{v}|, \label{equation old linear}
            \end{equation}
            then $\expolq{G}{S}=|\obs{G}{S}|(1-q)$.
    \end{theorem}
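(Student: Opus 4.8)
The plan is to expand $\expolq{G}{S}$ using the set-view form~\eqref{equation set view of E} and substitute the hypothesis directly. Write $b_v := \sizeobs{G}{v}$ for each $v\in S$, so that the hypothesis~\eqref{equation old linear} reads $\sizeobs{G}{W}=\sum_{v\in W}b_v$ for every $W\subseteq S$; in particular, taking $W=S$ gives $\sizeobs{G}{S}=\sum_{v\in S}b_v$, so it suffices to show $\expolq{G}{S}=\left(\sum_{v\in S}b_v\right)(1-q)$. The strategy is to collect the polynomial against the binomial basis and let the hypothesis turn the messy coefficients into simple binomial coefficients.

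Concretely, I would reuse the decomposition from the proof of Theorem~\ref{theorem linear algebraic characterization of copolynimal graphs}. Setting $s:=|S|$ and $a_k:=\sum_{W\in\binom{S}{k}}\sizeobs{G}{W}$, we have $\expolq{G}{S}=\sum_{k=0}^s a_k q^{s-k}(1-q)^k$. Substituting the hypothesis and swapping the order of summation, each $v\in S$ lies in exactly $\binom{s-1}{k-1}$ of the $k$-element subsets of $S$, so
\[
    a_k=\sum_{W\in\binom{S}{k}}\sum_{v\in W}b_v=\binom{s-1}{k-1}\sum_{v\in S}b_v=\binom{s-1}{k-1}\sizeobs{G}{S}.
\]

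The final step is a binomial collapse. Factoring out $\sizeobs{G}{S}$ and reindexing with $j=k-1$ (the $k=0$ term vanishes since $\binom{s-1}{-1}=0$),
\[
    \expolq{G}{S}=\sizeobs{G}{S}\sum_{j=0}^{s-1}\binom{s-1}{j}q^{(s-1)-j}(1-q)^{j+1}=\sizeobs{G}{S}(1-q)\bigl(q+(1-q)\bigr)^{s-1},
\]
which equals $\sizeobs{G}{S}(1-q)$ by the binomial theorem, as desired. Since the argument is essentially a routine computation, there is no deep obstacle; the one point that must be used carefully is that the hypothesis~\eqref{equation old linear} is assumed for \emph{every} subset $X\subseteq S$, not merely for $X=S$, since rewriting $a_k$ requires $\sizeobs{G}{W}=\sum_{v\in W}b_v$ to hold for all $W$ of each size $k$. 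With that in hand, the only real content is recognizing the coefficient $\binom{s-1}{k-1}$ and that the residual sum is the binomial expansion of $(q+(1-q))^{s-1}=1$.
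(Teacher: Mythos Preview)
Your argument is correct. The paper does not actually give its own proof of Theorem~\ref{theorem old linear}, since it is quoted from~\cite{BBFK2023}; however, the ``if'' direction of the paper's Theorem~\ref{theorem linear} proceeds by exactly the computation you carry out---grouping $\expolq{G}{S}$ by subset size, identifying the coefficient $\binom{s-1}{k-1}$, and collapsing via the binomial theorem---so your approach matches the paper's method essentially verbatim.
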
    
    Consider the graph $G$ in Figure~\ref{fig:3_2converse} with sensors placed at vertices $u$, $v$ and $w$.
    Note that we constructed this example using the tools developed in Section~\ref{sec:gadgets}.
    In Table~\ref{table table}, we give the size of $\obs{G}{X}$ for each $X \subseteq \{u,v,w\}$.
    Particularly, $\sizeobs{G}{\{u,v\}}=26$ but $\sizeobs{G}{u}+\sizeobs{G}{v}=11+16\neq26$, and so~\eqref{equation old linear} does not hold for every $X\subseteq \{u,v,w\}$.
    We further use values from Table~\ref{table table} and~\eqref{equation set view of E} to calculate that
    \[
    \expolq{G}{\{u,v,w\}}=(11+16+16)(1-q)q^2+(26+26+34)(1-q)^2q+43(1-q)^3=43(1-q).
    \]
    This shows that the converse of Theorem~\ref{theorem old linear} does not hold in general, and so the stronger condition we include in Theorem~\ref{theorem linear} is needed. 
    
    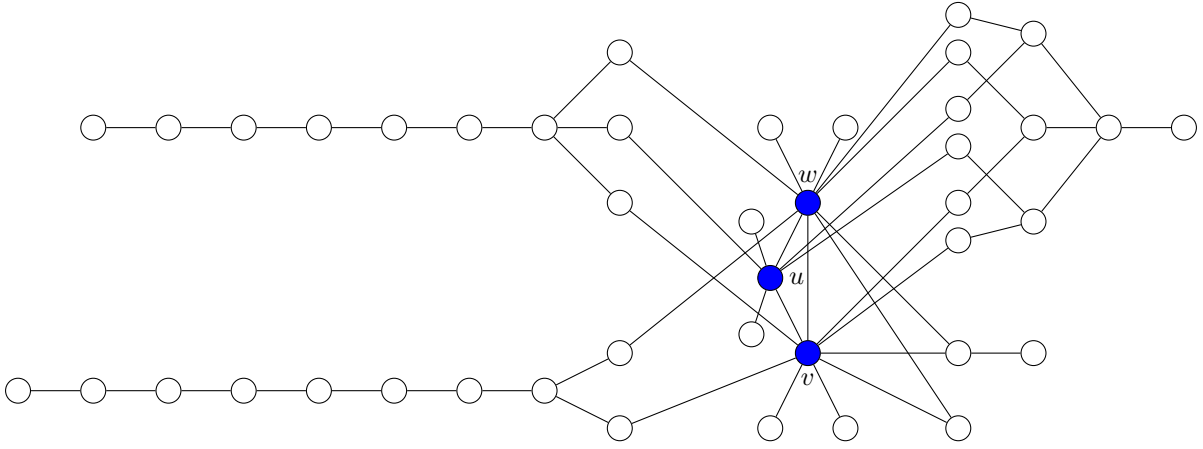
\begin{figure}
        \centering
        \begin{tikzpicture}
            \node [style=Meta Node] (-1) at (0.85, 0) {$u$};
            \node [style=Meta Node] (-2) at (1, -1.35) {$v$};
            \node [style=Meta Node] (-3) at (1, 1.35) {$w$};
            \node [style=Black Node] (32) at (4, -1) {};
            \node [style=Blue Node] (0) at (0.5, 0) {};
            \node [style=Black Node] (3) at (0.5, -2) {};
            \node [style=Black Node] (4) at (1.5, -2) {};
            \node [style=Black Node] (5) at (1.5, 2) {};
            \node [style=Black Node] (6) at (0.5, 2) {};
            \node [style=Blue Node] (7) at (1, 1) {};
            \node [style=Blue Node] (8) at (1, -1) {};
            \node [style=Black Node] (9) at (-1.5, 3) {};
            \node [style=Black Node] (10) at (-1.5, 2) {};
            \node [style=Black Node] (11) at (-1.5, 1) {};
            \node [style=Black Node] (12) at (-2.5, 2) {};
            \node [style=Black Node] (14) at (-1.5, -1) {};
            \node [style=Black Node] (15) at (-1.5, -2) {};
            \node [style=Black Node] (16) at (-2.5, -1.5) {};
            \node [style=Black Node] (17) at (-3.5, 2) {};
            \node [style=Black Node] (-10) at (-4.5, 2) {};
            \node [style=Black Node] (-11) at (-5.5, 2) {};
            \node [style=Black Node] (-12) at (-6.5, 2) {};
            \node [style=Black Node] (-13) at (-7.5, 2) {};
            \node [style=Black Node] (-14) at (-8.5, 2) {};
            \node [style=Black Node] (18) at (-3.5, -1.5) {};
            \node [style=Black Node] (-20) at (-4.5, -1.5) {};
            \node [style=Black Node] (-21) at (-5.5, -1.5) {};
            \node [style=Black Node] (-22) at (-6.5, -1.5) {};
            \node [style=Black Node] (-23) at (-7.5, -1.5) {};
            \node [style=Black Node] (-24) at (-8.5, -1.5) {};
            \node [style=Black Node] (-25) at (-9.5, -1.5) {};
            \node [style=Black Node] (19) at (3, 3.5) {};
            \node [style=Black Node] (20) at (3, 3) {};
            \node [style=Black Node] (21) at (3, 2.25) {};
            \node [style=Black Node] (22) at (3, 1.75) {};
            \node [style=Black Node] (23) at (3, 1) {};
            \node [style=Black Node] (24) at (3, 0.5) {};
            \node [style=Black Node] (25) at (4, 3.25) {};
            \node [style=Black Node] (26) at (4, 2) {};
            \node [style=Black Node] (27) at (4, 0.75) {};
            \node [style=Black Node] (28) at (5, 2) {};
            \node [style=Black Node] (29) at (6, 2) {};
            \node [style=Black Node] (30) at (3, -1) {};
            \node [style=Black Node] (31) at (3, -2) {};
            \node [style=Black Node] (33) at (0.25, -0.75) {};
            \node [style=Black Node] (34) at (0.25, 0.75) {};
            \draw [style=Black Edge] (6) to (7);
            \draw [style=Black Edge] (7) to (0);
            \draw [style=Black Edge] (0) to (8);
            \draw [style=Black Edge] (8) to (3);
            \draw [style=Black Edge] (4) to (8);
            \draw [style=Black Edge] (8) to (7);
            \draw [style=Black Edge] (7) to (5);
            \draw [style=Black Edge] (9) to (12);
            \draw [style=Black Edge] (12) to (10);
            \draw [style=Black Edge] (12) to (11);
            \draw [style=Black Edge] (0) to (10);
            \draw [style=Black Edge] (9) to (7);
            \draw [style=Black Edge] (16) to (14);
            \draw [style=Black Edge] (16) to (15);
            \draw [style=Black Edge] (14) to (7);
            \draw [style=Black Edge] (18) to (16);
            \draw [style=Black Edge] (12) to (17);
            \draw [style=Black Edge] (7) to (19);
            \draw [style=Black Edge] (19) to (25);
            \draw [style=Black Edge] (7) to (20);
            \draw [style=Black Edge] (20) to (26);
            \draw [style=Black Edge] (0) to (21);
            \draw [style=Black Edge] (21) to (25);
            \draw [style=Black Edge] (0) to (22);
            \draw [style=Black Edge] (22) to (27);
            \draw [style=Black Edge] (23) to (26);
            \draw [style=Black Edge] (27) to (24);
            \draw [style=Black Edge] (28) to (27);
            \draw [style=Black Edge] (26) to (28);
            \draw [style=Black Edge] (28) to (25);
            \draw [style=Black Edge] (29) to (28);
            \draw [style=Black Edge] (7) to (30);
            \draw [style=Black Edge] (31) to (7);
            \draw [style=Black Edge] (15) to (8);
            \draw [style=Black Edge] (8) to (11);
            \draw [style=Black Edge] (8) to (23);
            \draw [style=Black Edge] (8) to (24);
            \draw [style=Black Edge] (8) to (30);
            \draw [style=Black Edge] (31) to (8);
            \draw [style=Black Edge] (33) to (0);
            \draw [style=Black Edge] (0) to (34);
            \draw [style=Black Edge] (32) to (30);
            \draw [style=Black Edge] (17) to (-10);
            \draw [style=Black Edge] (-11) to (-10);
            \draw [style=Black Edge] (-12) to (-11);
            \draw [style=Black Edge] (-13) to (-12);
            \draw [style=Black Edge] (-14) to (-13);
            \draw [style=Black Edge] (18) to (-20);
            \draw [style=Black Edge] (-21) to (-20);
            \draw [style=Black Edge] (-21) to (-22);
            \draw [style=Black Edge] (-22) to (-23);
            \draw [style=Black Edge] (-23) to (-24);
            \draw [style=Black Edge] (-24) to (-25);
        \end{tikzpicture}
        \caption{A graph $G$ where $\expolq{G}{\{u,v,w\}}$ is linear, but~\eqref{equation old linear} is not satisfied for all $X\subseteq \{u,v,w\}$, showing that the converse of Theorem~\ref{theorem old linear} is false. See Table~\ref{table table} for $\sizeobs{G}{X}$ for each $X$.}
        \label{fig:3_2converse}
    \end{figure}
    
    \begin{table}
    \centering
        \begin{tabular}{|c|c|}
            \hline
            Subset&Number of observed vertices\\
            \hline
            \{u\}&11\\
            \{v\}&16\\
            \{w\}&16\\
            \hline
            \{u,v\}&26\\
            \{u,w\}&26\\
            \{v,w\}&34\\
            \hline
            \{u,v,w\}&43\\
            \hline
        \end{tabular}
        \caption{The number of observed vertices for each subset of $\{u,v,w\}$ from Figure~\ref{fig:3_2converse}.}\label{table table}
    \end{table}
    
    \begin{theorem}\label{theorem linear}
        Given a graph $G$ and a set $S\subseteq V(G)$ with $|S|=s$, $\expolq{G}{S}$ is linear if and only if for all $1\leq k \leq s$ we have
        \begin{equation}\label{lineariffcondition}
            \sum_{W\in \binom{S}{k}} \sizeobs{G}{W} = \binom{s-1}{k-1} \sum_{v\in S} \sizeobs{G}{v}.
        \end{equation}
    \end{theorem}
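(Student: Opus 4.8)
The plan is to reduce the statement to a clean polynomial factorization by using the basis representation already obtained in the proof of Theorem~\ref{theorem linear algebraic characterization of copolynimal graphs}. Writing $a_k := \sum_{W\in\binom{S}{k}}\sizeobs{G}{W}$, that proof gives
\[
\expolq{G}{S} = \sum_{k=0}^s a_k\, q^{s-k}(1-q)^k,
\]
and the key observation is that $a_0 = \sizeobs{G}{\emptyset} = 0$, since an empty PMU placement observes nothing. Because $a_1 = \sum_{v\in S}\sizeobs{G}{v}$, condition~\eqref{lineariffcondition} is precisely the assertion that $a_k = \binom{s-1}{k-1}a_1$ for every $1\le k\le s$. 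So the theorem becomes a purely algebraic claim: the combination $\sum_k a_k\, q^{s-k}(1-q)^k$ has degree at most one if and only if the $a_k$ form this binomial profile.

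First I would clear denominators with the substitution $t = q/(1-q)$, equivalently $q = t/(1+t)$ and $1-q = 1/(1+t)$. Factoring out $(1-q)^s$ rewrites the expected value polynomial as
\[
\expolq{G}{S} = (1-q)^s\, Q(t), \qquad Q(t) := \sum_{k=0}^s a_k\, t^{s-k},
\]
where $Q$ is a genuine polynomial in $t$ whose degree is at most $s-1$, precisely because $a_0=0$ annihilates the $t^s$ term. Rewriting a candidate linear polynomial $\alpha + \beta q$ in terms of $t$ yields $\alpha + \beta q = (\alpha + (\alpha+\beta)t)/(1+t)$, and since $(1-q)^s = (1+t)^{-s}$, linearity of $\expolq{G}{S}$ is equivalent to the polynomial identity $Q(t) = (\gamma_0 + \gamma_1 t)(1+t)^{s-1}$ for some constants $\gamma_0,\gamma_1$.

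The main step is then a degree comparison. The right-hand side $(\gamma_0+\gamma_1 t)(1+t)^{s-1}$ has leading term $\gamma_1 t^{s}$, hence degree exactly $s$ whenever $\gamma_1\ne 0$, whereas $Q$ has degree at most $s-1$. This forces $\gamma_1 = 0$, so $Q(t) = \gamma_0(1+t)^{s-1}$. Expanding $(1+t)^{s-1}=\sum_j \binom{s-1}{j}t^j$ and matching the coefficient of $t^{s-k}$ against $a_k$ gives $a_k = \gamma_0\binom{s-1}{k-1}$ for all $1\le k\le s$; taking $k=1$ identifies $\gamma_0 = a_1$, which is exactly~\eqref{lineariffcondition}. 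For the converse, substituting $a_k = \binom{s-1}{k-1}a_1$ back in, either through the same expansion or via the identity $\sum_{k=1}^s \binom{s-1}{k-1}q^{s-k}(1-q)^k = (1-q)$, collapses the sum to $\expolq{G}{S} = a_1(1-q)$, which is linear.

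I expect the only delicate point to be bookkeeping: justifying that the rational-function manipulations above may be read as polynomial identities, which is routine since all the objects are polynomials agreeing on a cofinite set of $q$ (the substitution is undefined only at $q=1$). The conceptual crux is recognizing that $a_0 = 0$ is exactly what drops the degree of $Q$ below $s$, thereby pinning linearity down to the single one-parameter family $Q(t) = \gamma_0(1+t)^{s-1}$ and leaving no room for a genuine quadratic or higher term to cancel.
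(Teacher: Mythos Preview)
Your argument is correct, and it takes a genuinely different route from the paper's proof. The reverse direction is essentially the same in both: substitute the binomial profile for the $a_k$'s and collapse via the Binomial Theorem. The forward direction is where they diverge. The paper argues structurally: since any linear $\expolq{G}{S}$ must equal $\ell(1-q)$ with $\ell=\sizeobs{G}{S}$, it exhibits an explicit graph $G'=K_{\ell-s+1}\sqcup\overline{K_{s-1}}$ together with $S'$ (one vertex per component) having exactly this polynomial, invokes Theorem~\ref{theorem linear algebraic characterization of copolynimal graphs} to equate the level sums, and then verifies~\eqref{lineariffcondition} by direct computation in $G'$, where the components make it transparent. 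Your approach is purely algebraic: the substitution $t=q/(1-q)$ converts the question into a degree comparison for $Q(t)=\sum_k a_k t^{s-k}$, and the key fact $a_0=0$ (equivalently $\expol{G}{S}{1}=0$) caps $\deg Q$ at $s-1$, forcing $\gamma_1=0$ and hence the binomial profile. Your route is more self-contained and avoids building an auxiliary graph; the paper's route has the advantage of showcasing a template (manufacture a canonical graph with the target polynomial, then transfer via Theorem~\ref{theorem linear algebraic characterization of copolynimal graphs}) that it reuses for the partial quadratic result in Theorem~\ref{theorem quadratic forward direction}.
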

    \begin{proof}
        Let $s:=|S|$.
        If \eqref{lineariffcondition} is satisfied for all $1\leq k \leq s$, then observe that
        \begin{align*}
            \expolq{G}{S}   &= \sum_{W\subseteq S} \sizeobs{G}{W} q^{|S\setminus W|} (1-q)^{|W|} \\
                            &= \sum_{k=1} ^{s} \sum_{W\in \binom{S}{k}} \sizeobs{G}{W} q^{s-k} (1-q)^{k} \\
                            &= \sum_{k=1} ^{s}\binom{s-1}{k-1}  \sum_{v\in S} \sizeobs{G}{v} q^{s-k} (1-q)^{k} \\
                            &=(1-q) \sizeobs{G}{S} \sum_{k=1}^{s} \binom{s-1}{k-1} q^{s-k} (1-q)^{k-1}
        \end{align*}
        By the Binomial Theorem (see e.g. Theorem 5.2.1 in~\cite{B2010}), $\displaystyle\sum_{k=1}^{s} \binom{s-1}{k-1} q^{s-k} (1-q)^{k-1}=1$, so
        \[
            \expolq{G}{S} = (1-q) \sizeobs{G}{S}.
        \]
        Now let us assume that $\expolq{G}{S}$ is linear.
        Let $\ell:=\sizeobs{G}{S}$.
        First off, since $\expol{G}{S}{0}=\ell$ and $\expol{G}{S}{1}=0$, we have $\expolq{G}{S}=\ell(1-q)$.
        Let us consider the graph $G':=K_{\ell-s+1}\sqcup \overline{K_{s-1}}$ and let $S'\subseteq V(G')$ be a set of size $s$ that contains one vertex from each component of $G'$.
        We can compute
        \[
            \expolq{G'}{S'}=\sum_{v\in V(G')}\mathrm{Pr}(v\text{ is observed})=\ell(1-q)=\expolq{G}{S},
        \]
        since $G'$ has $\ell$ vertices, each of which is observed if and only if a single sensor does not fail.
        By Theorem~\ref{theorem linear algebraic characterization of copolynimal graphs}, for each $1\leq k\leq s$,
        \begin{equation}\label{equation sum of obs are equal}
            \sum_{W\in\binom{S}{k}}\sizeobs{G}{W}=\sum_{W'\in\binom{S'}{k}}\sizeobs{G'}{W'}.
        \end{equation}
        Furthermore, note that $G'$ has the property that for any $W'\subseteq S'$, 
        \begin{equation}\label{equation strong observation property}
            \sizeobs{G'}{W'}=\sum_{v\in W'}\sizeobs{G'}{v},
        \end{equation}
        since each vertex observes its entire component, and no two vertices of $S'$ are in the same component of $G'$.
        Then summing~\eqref{equation strong observation property} over all subsets $W'$ of a fixed size $k$, $1\leq k\leq s$ yields that
        \begin{equation}\label{equation obs in G' act like we expect}
            \sum_{W'\in \binom{S'}{k}}\sizeobs{G'}{W'}=\sum_{W'\in \binom{S'}{k}}\sum_{v\in W'}\sizeobs{G'}{v}=\binom{s-1}{k-1}\sum_{v\in S}\sizeobs{G'}{v}.
        \end{equation}
        Then, for any $1\leq k\leq s$, combining~\eqref{equation sum of obs are equal} and~\eqref{equation obs in G' act like we expect}, we have that
        \[
            \sum_{W\in \binom{S}{k}} \sizeobs{G}{W} = \sum_{W'\in \binom{S'}{k}}\sizeobs{G'}{W'}=\binom{s-1}{k-1}\sum_{v\in S'}\sizeobs{G'}{v}= \binom{s-1}{k-1}\sum_{v\in S}  \sizeobs{G}{v}.
        \]
    \end{proof}

    A natural question one may ask is if Theorem~\ref{theorem linear} can be generalized to understand when a graph $G$ with sensor placement $S$ will have quadratic or cubic expected value polynomial, or in general will have degree at most $\ell$ for some $\ell\leq |S|$.
    We provide the following conjecture.

    \begin{conjecture}\label{conjecture degree L}
        Given a graph $G$ and a set $S\subseteq V(G)$ with $|S|=s$, $\mathcal{E}(G,S,q)$ is degree at most $\ell$ if and only if for all $1\leq k\leq s$,
        \begin{equation}\label{equation condition for degree at most L}
            \sum_{W\in \binom{S}{k}}|\mathrm{Obs}(G,W)|=\sum_{i=1}^{\ell}\alpha_{s,\ell}(k,i)\sum_{W\in \binom{S}{i}}|\mathrm{Obs}(G,W)|,
        \end{equation}
        where
        \begin{equation}\label{equation definition of the alphas}
            \alpha_{s,\ell}(k,i):=\binom{s-i}{k-i}-\sum_{j=i+1}^\ell \alpha_{s,\ell}(k,j)\cdot \binom{s-i}{j-i}.
        \end{equation}
    \end{conjecture}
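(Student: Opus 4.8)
The plan is to recast the conjecture as a statement in linear algebra about the integers $a_k := \sum_{W\in\binom{S}{k}}\sizeobs{G}{W}$, which are exactly the quantities in Theorem~\ref{theorem linear algebraic characterization of copolynimal graphs}. From the proof of that theorem, $\expolq{G}{S}=\sum_{k=0}^{s}a_k\,q^{s-k}(1-q)^k$ and $a_0=\sizeobs{G}{\emptyset}=0$. First I would translate ``degree at most $\ell$'' into explicit linear conditions on the $a_k$. Applying the degree-preserving substitution $u=1-q$ and expanding $(1-u)^{s-k}u^k$, the coefficient of $u^m$ in $\expolq{G}{S}$ is $\sum_{k=0}^{m}(-1)^{m-k}\binom{s-k}{m-k}a_k$. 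Hence $\expolq{G}{S}$ has degree at most $\ell$ if and only if
\[
\sum_{k=1}^{m}(-1)^{m-k}\binom{s-k}{m-k}a_k=0\qquad(\ell<m\le s),
\]
a system I will call $(\star)$. Writing $B$ for the lower-triangular, unit-diagonal matrix with entries $B_{m,k}=(-1)^{m-k}\binom{s-k}{m-k}$, condition $(\star)$ asserts that $Ba$ is supported on the indices $0,\dots,\ell$.

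Next, I would note that $(\star)$ and the conjectured identity are both triangular systems in which each high-index entry $a_k$ with $k>\ell$ is forced to be a fixed linear combination of the free variables $a_1,\dots,a_\ell$; the two systems therefore cut out the same solution set precisely when these linear combinations agree coefficientwise. So the heart of the proof is to carry out the back-substitution of $(\star)$ and match it to $\alpha_{s,\ell}$. The clean input here is that $B$ has the explicit inverse $(B^{-1})_{m,k}=\binom{s-k}{m-k}$; verifying $BB^{-1}=I$ reduces, via the subset-of-subset identity $\binom{N}{a}\binom{N-a}{b}=\binom{N}{a+b}\binom{a+b}{a}$ followed by the alternating collapse $\sum_t(-1)^t\binom{n}{t}=0^{\,n}$, to a Kronecker delta. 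Inverting, the solution of $(\star)$ is $a_k=\sum_{i=1}^{\ell}\widehat\alpha(k,i)\,a_i$ for $k>\ell$, where $\widehat\alpha(k,i)=\sum_{m=i}^{\ell}(-1)^{m-i}\binom{s-m}{k-m}\binom{s-i}{m-i}$.

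It then remains to identify $\widehat\alpha$ with the $\alpha_{s,\ell}$ of the conjecture and to dispatch the range $k\le\ell$. For the identification I would rewrite the defining recursion in the equivalent closed form $\binom{s-i}{k-i}=\sum_{j=i}^{\ell}\alpha_{s,\ell}(k,j)\binom{s-i}{j-i}$ (using $\binom{s-i}{0}=1$) and check that $\widehat\alpha$ satisfies it: substituting the formula for $\widehat\alpha$, swapping the order of summation, and applying the same subset-of-subset reindexing collapses the inner sum to $[m=i]$, leaving exactly $\binom{s-i}{k-i}$. Because the recursion determines $\alpha_{s,\ell}$ uniquely by downward induction from the base case $\alpha_{s,\ell}(k,\ell)=\binom{s-\ell}{k-\ell}$, this forces $\widehat\alpha=\alpha_{s,\ell}$. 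The identical collapse shows $\alpha_{s,\ell}(k,i)=\delta_{ki}$ whenever $k\le\ell$, so for those $k$ the conjectured identity degenerates to $a_k=a_k$ and imposes nothing; thus requiring the identity for all $1\le k\le s$ is the same as requiring it for $\ell<k\le s$, which is exactly $(\star)$.

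The main obstacle is purely the bookkeeping in the two binomial convolutions---establishing $(B^{-1})_{m,k}=\binom{s-k}{m-k}$ and verifying that $\widehat\alpha$ solves the recursion---since both hinge on the same reindexing and alternating-sum cancellation and are easy to mis-index, especially around the boundary contributions of $a_0=0$ and the tautological band $k\le\ell$. One might instead hope to imitate the proof of Theorem~\ref{theorem linear}, exhibiting canonical witness graphs and appealing to Theorem~\ref{theorem linear algebraic characterization of copolynimal graphs}; but the degree-at-most-$\ell$ polynomials form an $\ell$-dimensional family rather than a single ray, so this would require a whole spanning family of realizing graphs, making the direct linear-algebraic route the more economical one.
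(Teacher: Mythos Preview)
Your argument is correct and, notably, it proves the full biconditional, whereas the paper establishes only the reverse direction in general (the theorem following Lemma~\ref{lemma alphas are degree at most L}) together with the forward direction for $\ell=1$ (Theorem~\ref{theorem linear}) and a restricted slice of $\ell=2$ (Theorem~\ref{theorem quadratic forward direction}). The paper's route to those forward cases is exactly the witness-graph strategy you sketch and set aside at the end: build an explicit $H$ with $\expolq{H}{S'}=\expolq{G}{S}$, verify \eqref{equation condition for degree at most L} for $H$ by hand, and transfer it back via Theorem~\ref{theorem linear algebraic characterization of copolynimal graphs}. That method is constructive and ties the statement to concrete graphs, but it runs into the obstruction you identify---the degree-$\le\ell$ polynomials form an $\ell$-parameter family, and the gadget machinery of Section~\ref{sec:gadgets} cannot freely tune the linear coefficient (Proposition~\ref{proposition uncontrollable conefficients}), which is why Theorem~\ref{theorem quadratic forward direction} carries the hypothesis $b\ge\max\{s+2a+16,s+6\}$.

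Your linear-algebraic approach sidesteps all of this by recognising that the conjecture is really a statement about the coordinates $a_k$ in the basis $\{q^{s-k}(1-q)^k\}$, independent of any graph realisation. The key observation the paper does not make is that the change of basis from $\{q^{s-k}(1-q)^k\}$ to $\{(1-q)^m\}$ is governed by the unit lower-triangular matrix $B$ with explicit inverse $(B^{-1})_{m,k}=\binom{s-k}{m-k}$, so ``degree $\le\ell$'' becomes the transparent condition that $Ba$ is supported on $\{0,\dots,\ell\}$, and back-substitution yields a closed form $\widehat\alpha(k,i)=\sum_{m=i}^{\ell}(-1)^{m-i}\binom{s-m}{k-m}\binom{s-i}{m-i}$ for the coefficients. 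Verifying that $\widehat\alpha$ satisfies the defining recursion \eqref{equation definition of the alphas} then closes the loop. This is both shorter and strictly stronger than what the paper proves; in particular it resolves Conjecture~\ref{conjecture degree L} outright.
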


    Since the parameters $\alpha_{s,\ell}(k,i)$ are defined in reference to each other, it is not obvious that these parameters are well-defined.
    We address this concern in Lemma~\ref{lemma alphas are degree at most L}.
    We note that Conjecture~\ref{conjecture degree L} with $\ell=1$ corresponds exactly to Theorem~\ref{theorem linear}.
    We will show that the reverse direction of Conjecture~\ref{conjecture degree L} holds.
    Additionally, in Theorem~\ref{theorem quadratic forward direction} we will show that when $\ell=2$, for ``many'' graphs, the forward direction of Conjecture~\ref{conjecture degree L} holds.
    
    Before we present these results, let us give some intuition on Conjecture~\ref{conjecture degree L} and the parameters $\alpha_{s,\ell}(k,i)$.
    Recall from Theorem~\ref{theorem linear algebraic characterization of copolynimal graphs} that the sum of the form $\sum_{W\in \binom{S}{k}}|\mathrm{Obs}(G,W)|$ corresponds exactly to the coefficient of the basis vector $q^{s-k}(1-q)^k$ when we write $\expolq{G}{S}$ in terms of the basis $\{q^{s-k}(1-q)^k:0\leq k\leq s\}$.
    In this way, Conjecture~\ref{conjecture degree L} can be thought of as saying that when we restrict our attention to polynomials of degree at most $\ell$ (where $\ell<s$), the loss of degrees of freedom caused by this restriction corresponds exactly to the basis vectors $q^{s-k}(1-q)^k$ with $k>\ell$.
    
    Let us consider a guiding example to get a sense of the $\alpha_{s,\ell}(k,i)$'s.
    If we have a graph $G$, sensor locations $S\subseteq V(G)$, and $\expolq{G}{S}$ is quadratic (but $|S|=s>2$), then we expect the sum corresponding to triples, $\sum_{W\in \binom{S}{3}}|\mathrm{Obs}(G,W)|,$ should be determined solely based on the sums involving sets of size $2$ or less.
    In particular Conjecture~\ref{conjecture degree L} claims that
     \[
        \sum_{W\in \binom{S}{3}}|\mathrm{Obs}(G,W)|=\binom{s-2}{3-2}\sum_{W\in \binom{S}{2}}|\mathrm{Obs}(G,W)|+\left(\binom{s-1}{3-1}-\binom{s-2}{3-2}\binom{s-1}{2-1}\right)\sum_{v\in S}|\mathrm{Obs}(G,W)|
    \]
    The idea behind these coefficients is that contribution of pairs to the sum $\sum_{W\in \binom{S}{3}}|\mathrm{Obs}(G,W)|$ should be $s-2=\binom{s-2}{3-2}$ since each pair in $S$ is contained in $s-2$ triples in $S$, giving us $\alpha_{s,2}(3,2)=\binom{s-2}{3-2}$.
    Similarly, the contribution of singletons in $S$ to the sum $\sum_{W\in \binom{S}{3}}|\mathrm{Obs}(G,W)|$ should be $\binom{s-1}{2}=\binom{s-1}{3-1}$ since each singleton is contained in $\binom{s-1}{2}$ triples; however some of the contribution of singletons is already embedded in the contribution we've already accounted for from the pairs.
    In particular, each singleton in is $s-1=\binom{s-1}{2-1}$ pairs, and each pair has already been counted $\binom{s-2}{3-2}$ times in the term $\binom{s-2}{3-2}\sum_{W\in \binom{S}{2}}|\mathrm{Obs}(G,W)|$.
    We then set $\alpha_{s,3}(3,1)=\binom{s-1}{3-1}-\binom{s-2}{3-2}\binom{s-1}{2-1}=\binom{s-1}{3-1}-\alpha_{s,3}(3,2)\binom{s-1}{2-1}$ so that we get a total singleton contribution of $\binom{s-1}{3-1}$ without over counting.
    When $\ell,k>3$ the $\alpha$'s are defined similarly to make it so that $i$-sets contribute a total of $\binom{s-i}{k-i}$, after accounting for the contributions embedded in the terms corresponding to $j$-sets for $i<j\leq \ell$. 
    
    We now turn our attention to proving the reverse direction of Conjecture~\ref{conjecture degree L}, beginning with a lemma.
    
    \begin{lemma}\label{lemma alphas are degree at most L}
        Let $\ell\leq s$ be natural numbers.
        Let $\alpha_{s,\ell}(k,i)$ be defined as in~\eqref{equation definition of the alphas}.
        Then $\alpha_{s,\ell}(k,i)$ is well-defined for all $k,i\in\mathbb{N}$ and
        \[
            \sum_{k=1}^s\alpha_{s,\ell}(k,i)q^{s-k}(1-q)^k
        \]
        is a polynomial in $q$ of degree at most $\ell$ for all $i\leq \ell$.
    \end{lemma}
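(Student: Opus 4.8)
The plan is to exhibit each polynomial $p_i(q):=\sum_{k=1}^s\alpha_{s,\ell}(k,i)\,q^{s-k}(1-q)^k$ explicitly as a linear combination of the polynomials $(1-q)^t$ with $1\le t\le\ell$, all of which have degree at most $\ell$. Throughout I read the statement ``for all $i\le\ell$'' as $1\le i\le\ell$, which is the range used in Conjecture~\ref{conjecture degree L}.

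First I would settle well-definedness. The right-hand side of \eqref{equation definition of the alphas} refers only to the values $\alpha_{s,\ell}(k,j)$ with $i<j\le\ell$, so for each fixed $k$ the quantities $\alpha_{s,\ell}(k,i)$ can be constructed by downward induction on $i$: when $i\ge\ell$ the sum in \eqref{equation definition of the alphas} is empty and $\alpha_{s,\ell}(k,i)=\binom{s-i}{k-i}$, while for $i<\ell$ the defining expression uses only previously constructed values. Since $\ell$ is finite this recursion terminates, so $\alpha_{s,\ell}(k,i)$ is well-defined for all $k,i\in\mathbb{N}$ (with the standard convention $\binom{n}{r}=0$ for $r<0$).

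For the degree bound, the central observation is that \eqref{equation definition of the alphas} is equivalent, after moving the $j=i$ term to the left and using that $\binom{s-t}{i-t}=0$ for $i<t$, to the triangular family of identities
\[
    \sum_{i=t}^{\ell}\alpha_{s,\ell}(k,i)\binom{s-t}{i-t}=\binom{s-t}{k-t},\qquad 1\le t\le\ell.
\]
Alongside this I would record the collapse
\[
    \sum_{k=0}^{s}\binom{s-t}{k-t}q^{s-k}(1-q)^k=(1-q)^t,
\]
which follows from the substitution $k=t+j$ and the Binomial Theorem applied to $(q+(1-q))^{s-t}$; its $k=0$ term vanishes for $t\ge1$, so restricting to $k\ge1$ leaves the value unchanged. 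Multiplying $p_i$ by $\binom{s-t}{i-t}$, summing over $i$, and interchanging the order of summation then yields, for each $1\le t\le\ell$,
\begin{align*}
    \sum_{i=1}^{\ell}\binom{s-t}{i-t}\,p_i(q)
    &=\sum_{k=1}^{s}\left(\sum_{i=t}^{\ell}\alpha_{s,\ell}(k,i)\binom{s-t}{i-t}\right)q^{s-k}(1-q)^k\\
    &=\sum_{k=1}^{s}\binom{s-t}{k-t}q^{s-k}(1-q)^k=(1-q)^t.
\end{align*}

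Finally I would read these $\ell$ relations as a single linear system $M\vec p=\vec v$ over the ring of polynomials, where $M$ is the $\ell\times\ell$ matrix with entries $M_{t,i}=\binom{s-t}{i-t}$, $\vec p=(p_1,\dots,p_\ell)^{\mathsf T}$, and $\vec v=\bigl((1-q)^1,\dots,(1-q)^\ell\bigr)^{\mathsf T}$. Since $M_{t,i}=0$ for $i<t$ and $M_{t,t}=\binom{s-t}{0}=1$, the matrix $M$ is upper unitriangular, hence invertible with integer inverse; therefore $\vec p=M^{-1}\vec v$ expresses each $p_i(q)$ as an integer combination of the polynomials $(1-q)^t$ with $1\le t\le\ell$, each of degree at most $\ell$. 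This forces $\deg p_i\le\ell$, as required. I expect the main obstacle to be spotting the reformulation of \eqref{equation definition of the alphas} as the triangular identity together with the binomial collapse $\sum_{k}\binom{s-t}{k-t}q^{s-k}(1-q)^k=(1-q)^t$; once these are in hand, the unitriangular inversion and the degree conclusion are routine.
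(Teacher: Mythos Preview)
Your argument is correct and follows essentially the same path as the paper's proof. Both hinge on the binomial collapse $\sum_{k}\binom{s-t}{k-t}q^{s-k}(1-q)^k=(1-q)^t$ together with the recursive structure of the $\alpha$'s; the paper carries out the back-substitution explicitly by induction on $\ell-i$, obtaining $p_i(q)=(1-q)^i-\sum_{j>i}\binom{s-i}{j-i}p_j(q)$, while you package the same back-substitution as inversion of the upper unitriangular matrix $M_{t,i}=\binom{s-t}{i-t}$. These are two presentations of the same computation.
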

    
    \begin{proof}
        Let us fix $\ell$ and $s$ and write $\alpha(k,i):=\alpha_{s,\ell}(k,i)$ for ease of notation.
        If we fix $k$, then note that for all $i\geq \ell$, we have $\alpha(k,i)=\binom{s-i}{k-i}$, and then for $i<\ell$,  we can see that $\alpha(k,i)$ only depends on $\alpha(k,j)$ for $j>i$, and thus by strong induction on $\ell-i$ (note that $j>i$ implies that $\ell-j<\ell-i$), $\alpha(k,i)$ is well-defined.
        
        Now, consider $\sum_{k=1}^s\alpha(k,i)q^{s-k}(1-q)^k$.
        Let us proceed by induction on $\ell-i$.
        When $i=\ell$, we have
        \[
            \sum_{k=1}^s\alpha(k,i)q^{s-k}(1-q)^k=\sum_{k=1}^s\binom{s-i}{k-i}q^{s-k}(1-q)^k=(1-q)^i\sum_{k=1}^s\binom{s-i}{k-i}q^{s-k}(1-q)^{k-i}=(1-q)^i,
        \]
        where the last equality follows from the Binomial Theorem.
        Since $i=\ell$, the resulting polynomial is degree $\ell$.
        
        Now let $\ell-i>0$ and assume $\sum_{k=1}^s\alpha(k,i^*)q^{s-k}(1-q)^k$ is a polynomial of degree at most $\ell$ whenever $\ell-i^*<\ell-i$ (i.e. whenever $i^*>i$).
        Then
        \begin{align*}
            \sum_{k=1}^s\alpha(k,i)q^{s-k}(1-q)^k&=\sum_{k=1} ^{s}\left(\binom{s-i}{k-i}-\sum_{j=i+1}^\ell \alpha(k,j)\cdot \binom{s-i}{j-i}\right)q^{s-k} (1-q)^{k}\\
            &=\sum_{k=1} ^{s}\binom{s-i}{k-i}q^{s-k} (1-q)^{k}-\sum_{k=1}^s\sum_{j=i+1}^\ell \alpha(k,j)\cdot \binom{s-i}{j-i}q^{s-k} (1-q)^{k}\\
            &=(1-q)^i\sum_{k=1} ^{s}\binom{s-i}{k-i}q^{s-k} (1-q)^{k-i}-\sum_{j=i+1}^\ell\binom{s-i}{j-i}\sum_{k=1}^s \alpha(k,j)\cdot q^{s-k} (1-q)^{k}\\
            &=(1-q)^i-\sum_{j=i+1}^\ell\binom{s-i}{j-i}\sum_{k=1}^s \alpha(k,j)\cdot q^{s-k} (1-q)^{k}.
        \end{align*}
        Then since $i<\ell$, $(1-q)^i$ is degree at most $\ell$, and by induction, $\sum_{k=1}^s \alpha(k,j)\cdot q^{s-k} (1-q)^{k}$ is degree at most $\ell$, so the sum of these is degree at most $\ell$.
    \end{proof}
    
    The following theorem proves the reverse direction of Conjecture~\ref{conjecture degree L}.
    
    \begin{theorem}
        Given a graph $G$ and a set $S\subseteq V(G)$ with $|S|=s$.
        If for all $1\leq k\leq s$
        \begin{equation}\label{equation degree iff condition}
            \sum_{W\in \binom{S}{k}} \sizeobs{G}{W}=\sum_{i=1}^{\ell}\alpha_{s,\ell}(k,i)\sum_{W\in \binom{S}{i}} \sizeobs{G}{W},
        \end{equation}
        where $\alpha_{s,\ell}(k,i)$ is as in~\eqref{equation definition of the alphas}, then $\mathcal{E}(G,S,q)$ is degree at most $\ell$.
    \end{theorem}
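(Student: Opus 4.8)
The plan is to combine the basis expansion furnished by Theorem~\ref{theorem linear algebraic characterization of copolynimal graphs} with the degree bound already established in Lemma~\ref{lemma alphas are degree at most L}; once those two ingredients are lined up, this direction is a short assembly argument. First I would introduce the shorthand $a_k := \sum_{W\in\binom{S}{k}}\sizeobs{G}{W}$ for $0\leq k\leq s$. Since no sensors observe nothing, $\obs{G}{\emptyset}=\emptyset$ and hence $a_0=0$, so the basis computation from the proof of Theorem~\ref{theorem linear algebraic characterization of copolynimal graphs} gives
\[
    \expolq{G}{S}=\sum_{k=1}^{s} a_k\, q^{s-k}(1-q)^k .
\]

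Next I would substitute the hypothesis~\eqref{equation degree iff condition}, which asserts that $a_k=\sum_{i=1}^{\ell}\alpha_{s,\ell}(k,i)\,a_i$ for every $1\leq k\leq s$. Plugging this in and interchanging the two finite sums yields
\[
    \expolq{G}{S}=\sum_{i=1}^{\ell} a_i\sum_{k=1}^{s}\alpha_{s,\ell}(k,i)\, q^{s-k}(1-q)^k .
\]
The interchange is harmless since both index sets are finite, and the point of isolating the inner sum is that it is exactly the object controlled by the lemma.

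Finally, I would invoke Lemma~\ref{lemma alphas are degree at most L}: for each fixed $i\leq\ell$, the inner polynomial $\sum_{k=1}^{s}\alpha_{s,\ell}(k,i)\,q^{s-k}(1-q)^k$ has degree at most $\ell$. Consequently $\expolq{G}{S}$ is a finite linear combination, with the integer coefficients $a_i$, of polynomials each of degree at most $\ell$, and therefore itself has degree at most $\ell$, which is the claim.

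I do not expect a genuine obstacle in this direction, since the conceptual difficulty---verifying that the recursively defined coefficients $\alpha_{s,\ell}(k,i)$ are well defined and assemble into degree-$\leq\ell$ polynomials---has already been absorbed into Lemma~\ref{lemma alphas are degree at most L}. The only items warranting a line of care are that the $k=0$ basis term drops out because $a_0=0$, and that the hypothesis is applied for the full range $1\leq k\leq s$ (including the indices $k\leq\ell$, where it holds as well) so that the substitution is valid term by term.
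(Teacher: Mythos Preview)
Your proposal is correct and follows essentially the same route as the paper: expand $\expolq{G}{S}$ in the basis $\{q^{s-k}(1-q)^k\}$, substitute the hypothesis~\eqref{equation degree iff condition} for each $a_k$, swap the finite sums, and invoke Lemma~\ref{lemma alphas are degree at most L} to bound the degree of each inner sum. The only cosmetic differences are that you name the coefficients $a_k$ and explicitly justify dropping the $k=0$ term, whereas the paper leaves these implicit.
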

    
    \begin{proof}
        If \eqref{equation degree iff condition} is satisfied for all $1\leq k \leq s$, then observe that
            \begin{align*}
                \expolq{G}{S}   &= \sum_{W\subseteq S} \sizeobs{G}{W} q^{|S\setminus W|} (1-q)^{|W|} \\
                                &= \sum_{k=1} ^{s} \sum_{W\in \binom{S}{k}} \sizeobs{G}{W} q^{s-k} (1-q)^{k} \\
                                &= \sum_{k=1} ^{s}\sum_{i=1}^{\ell}\alpha_{s,\ell}(k,i)\left(\sum_{W\in \binom{S}{i}} \sizeobs{G}{W}\right)q^{s-k} (1-q)^{k} \\
                                &=\sum_{i=1}^{\ell}\left(\sum_{W\in \binom{S}{i}} \sizeobs{G}{W}\right)\sum_{k=1} ^{s}\alpha_{s,\ell}(k,i)q^{s-k} (1-q)^{k}.
            \end{align*}
        Then by Lemma~\ref{lemma alphas are degree at most L}, $\sum_{k=1} ^{s}\alpha_{s,\ell}(k,i)q^{s-k} (1-q)^{k}$ is a polynomial of degree at most $\ell$, so $\expolq{G}{S}$ is degree at most $\ell$.
    \end{proof}
    
    We close this section by proving the forward direction of Conjecture~\ref{conjecture degree L} holds for graphs $G$ and sensor locations $S\subseteq V(G)$, as long as the linear coefficient in $\expolq{G}{S}$ is a large enough negative.
    Before we state the specific result, we note a few things about this restriction on the linear coefficient.
    First, recall that Proposition~\ref{proposition uncontrollable conefficients} gives us that $\expolq{G}{S}[q]\leq 0$, so the condition that the linear coefficient is a large negative is not as restrictive as it may seem.
    Second, the idea behind the proof of Theorem~\ref{theorem quadratic forward direction} is similar to that of Theorem~\ref{theorem linear} --- given a graph $G$ and sensor location $S$, we wish to replace $G$ with some other graph $H$ with the same expected value polynomial, but whose structure we understand better.
    We then want to show that $H$ satisfies~\eqref{equation quadratic expression for lambdak}, and using Theorem~\ref{theorem linear algebraic characterization of copolynimal graphs}, lift this back to a result for $G$.
    Theorem~\ref{theorem induced any polynomial} allows us to construct a graph $H$ that matches most of the coefficients of $\expolq{G}{S}$, however in Theorem~\ref{theorem induced any polynomial}, we do not have complete control over the linear coefficient.
    It is straightforward to add structure which will make the linear coefficient more negative without affecting the other coefficients; one can add multiple leaves adjacent to a particular sensor location to affect only the linear coefficient. 
    Adding structures in this way allows us to make the following claim about all polynomials with sufficiently large (negative) linear coefficient.
    
    \begin{theorem}\label{theorem quadratic forward direction}
        For any integers $s\in\mathbb{N}$ and $a\in \mathbb{Z}$ and any graph $G$ and set $S\subseteq V(G)$ with $|S|=s$, if $b\geq \max\{s+2a+16,s+6\}$ and
        \[
            \expolq{G}{S}=ax^2-bx+c,
        \]
        then for $1\leq k\leq s$,
        \begin{equation}\label{equation quadratic expression for lambdak}
            \sum_{W\in \binom{S}{k}}|\mathrm{Obs}(G,W)|=\binom{s-2}{k-2}\sum_{W\in \binom{S}{2}}|\mathrm{Obs}(G,W)|+\left(\binom{s-1}{k-1}-(s-1)\binom{s-2}{k-2}\right)\sum_{v\in S}|\mathrm{Obs}(G,W)|
        \end{equation}
    \end{theorem}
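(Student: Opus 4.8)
The plan is to mimic the proof of Theorem~\ref{theorem linear}: rather than reason about $G$ directly, I would manufacture a structurally transparent witness graph $H$ on a sensor set $S'$ with $|S'|=s$ and $\expolq{H}{S'}=\expolq{G}{S}=aq^2-bq+c$, verify~\eqref{equation quadratic expression for lambdak} for $H$ by direct counting, and then transport the identity back to $G$ using Theorem~\ref{theorem linear algebraic characterization of copolynimal graphs}. That theorem guarantees that once $\expolq{H}{S'}=\expolq{G}{S}$ we have $\sum_{W\in\binom{S}{k}}\sizeobs{G}{W}=\sum_{W'\in\binom{S'}{k}}\sizeobs{H}{W'}$ for every $k$, so any identity of the desired shape proved for $H$ immediately lifts to $G$.

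For the witness graph I would take $S'=\{v_1,\dots,v_s\}$ and attach only pair-indexed structure together with pendant leaves: to each pair $\{v_i,v_j\}$ affix a fork gadget $\forka{\psi_{ij}}{2}$ and a spoon gadget $\spoona{\phi_{ij}}{2}$, and give each $v_i$ at least two private leaves (which also supplies the leaf hypothesis of Lemma~\ref{lemma probability of path head being observed}). By Lemma~\ref{lemma probability of path head being observed}, the fork path of $\{v_i,v_j\}$ is observed exactly when both sensors survive (an $(1-q)^2$ contribution, hence $+1$ to the coefficient of $q^2$ per path vertex), while the spoon path is observed when at least one survives (a $1-q^2$ contribution, hence $-1$ per path vertex); so tuning the lengths $\psi_{ij},\phi_{ij}$ lets the pairs contribute any prescribed integer to the coefficient of $q^2$, realizing $a$. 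Pendant leaves influence only the linear and constant coefficients, so—exactly as in the remark following Proposition~\ref{proposition uncontrollable conefficients}—they can be used to drive the linear coefficient to the prescribed $-b$ without disturbing $a$. I would also observe that the constant term needs no separate control: since $\sizeobs{H}{\emptyset}=\sizeobs{G}{\emptyset}=0$, both polynomials vanish at $q=1$, forcing $c=b-a$ in each, so matching the quadratic and linear coefficients matches the whole polynomial. The role of the hypothesis $b\geq\max\{s+2a+16,s+6\}$ is precisely to keep this feasible: it guarantees that the path lengths and leaf multiplicities solving for $(a,b)$ come out as nonnegative integers, so that $H$ is an honest graph.

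The verification step is where the symmetry of $H$ pays off. Every vertex of $H$ is observed under a placement $W'\subseteq S'$ by one of three rules: a pendant leaf or a fork subdivision vertex is observed iff one designated sensor lies in $W'$; a fork-path vertex is observed iff a designated pair lies in $W'$; and a connection vertex or a spoon vertex is observed iff $W'$ meets a designated pair. Counting the $k$-subsets observing a fixed vertex gives $\binom{s-1}{k-1}$, $\binom{s-2}{k-2}$, or $\binom{s}{k}-\binom{s-2}{k}$ respectively, and Pascal's rule rewrites the last as $2\binom{s-1}{k-1}-\binom{s-2}{k-2}$. Summing over all vertices yields $\sum_{W'\in\binom{S'}{k}}\sizeobs{H}{W'}=\lambda\binom{s-1}{k-1}+\mu\binom{s-2}{k-2}$ for constants $\lambda,\mu$ independent of $k$; evaluating at $k=1,2$ expresses $\lambda,\mu$ through $\sum_{v\in S'}\sizeobs{H}{v}$ and $\sum_{W'\in\binom{S'}{2}}\sizeobs{H}{W'}$, and a short binomial identity then recovers exactly~\eqref{equation quadratic expression for lambdak}.

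The main obstacle I anticipate is bookkeeping rather than ideas: a sensor $v_i$ can itself be force-observed through a gadget even when $v_i\notin W'$, so the clean ``observed iff $v_i\in W'$'' rule fails for the $s$ sensor vertices and must be tracked as a bounded correction, and pinning down the exact minimal path lengths and leaf counts is what produces the numerical bound on $b$. Finally, I would remark that the hypothesis on $b$ is in fact removable: the reverse direction of Conjecture~\ref{conjecture degree L} (established above) shows, via Lemma~\ref{lemma alphas are degree at most L}, that the vectors satisfying~\eqref{equation quadratic expression for lambdak} map into the degree-$\le2$ polynomials; this solution space has the same dimension as the space of degree-$\le2$ polynomials vanishing at $q=1$, which is where every $\expolq{G}{S}$ lives because $\sizeobs{G}{\emptyset}=0$ forces $\expol{G}{S}{1}=0$. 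A dimension count then forces the two spaces to coincide, so $\expolq{G}{S}$ being quadratic already implies~\eqref{equation quadratic expression for lambdak} with no restriction on $b$.
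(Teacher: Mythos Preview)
Your overall plan---build a witness $H$ with $\expolq{H}{S'}=\expolq{G}{S}$, verify~\eqref{equation quadratic expression for lambdak} for $H$, and lift via Theorem~\ref{theorem linear algebraic characterization of copolynimal graphs}---is exactly the paper's approach, but your specific witness does not work. With a $\spoona{}{2}$-gadget on \emph{every} pair, the $C_4$ in the spoon on $\{v_i,v_j\}$ lets the degree-$2$ vertex $c$ force $v_i$ as soon as $v_j$ survives; since $v_i$ is spoon-partnered with all of $S'\setminus\{v_i\}$, you get $\Pr(v_i\text{ observed})=1-q^s$. These $s$ contributions pin $\expolq{H}{S'}[q^s]=-s$, and no choice of $\psi_{ij},\phi_{ij}$ or leaf multiplicities touches that coefficient, so $\expolq{H}{S'}$ is never quadratic and can never equal $\expolq{G}{S}$. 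This is not a bounded bookkeeping correction: the rule ``observed iff $W'\neq\emptyset$'' contributes $\binom{s}{k}$ per sensor, which for $s\geq 4$ is \emph{not} of the form $\lambda\binom{s-1}{k-1}+\mu\binom{s-2}{k-2}$, so both the polynomial-matching step and the verification step fail. The paper avoids this by concentrating all the quadratic structure on a \emph{single} pair: its witness is $H'(t,w)\sqcup K_{d+1}\sqcup\overline{K_{s-3}}$, one fork and one spoon on two sensors in $A$, a clique component carrying a third sensor, and $s-3$ isolated sensors. Then every sensor's observation depends on at most the two vertices of $A$, the polynomial is visibly quadratic, and the component decomposition reduces the verification to a short direct count.

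Your closing dimension argument, however, is correct and is a genuinely different route the paper does not take. The proof of the reverse direction of Conjecture~\ref{conjecture degree L} is purely algebraic: any vector $(0,a_1,\dots,a_s)$ satisfying~\eqref{equation quadratic expression for lambdak} maps under $\vec a\mapsto\sum_k a_k q^{s-k}(1-q)^k$ into $\{p\in\mathcal P_2:p(1)=0\}$. The solution set of~\eqref{equation quadratic expression for lambdak} inside $\{a_0=0\}$ has free parameters $a_1,a_2$ only, hence is $2$-dimensional, as is $\{p\in\mathcal P_2:p(1)=0\}$; since the Bernstein-type basis map is an isomorphism, the two subspaces coincide. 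Because every $\expolq{G}{S}$ has $a_0=\sizeobs{G}{\emptyset}=0$, this already forces~\eqref{equation quadratic expression for lambdak} whenever $\expolq{G}{S}$ is quadratic, with no hypothesis on $b$. The same count (using that $\alpha_{s,\ell}(k,i)=\delta_{k,i}$ for $k\le\ell$) works for every $\ell$ and in fact settles Conjecture~\ref{conjecture degree L} outright, rendering the witness-graph detour unnecessary.
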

    \begin{figure}
        \centering
        \begin{tikzpicture}
            \node [style=Black Node] (1) at (1, 1) {};
            \node [style=Black Node] (2) at (-1, 1) {};
            \node [style=Black Node] (4) at (1, -1) {};
            \node [style=Black Node] (5) at (-1, -1) {};
            \node [style=Black Node] (6) at (0, 0) {};
            \node [style=Black Node] (7) at (1, 0) {};
            \node [style=Red Node] (8) at (2, 0) {};
            \node [style=Blue Node] (9) at (0, -1) {};
            \node [style=Blue Node] (10) at (0, 1) {};
            \node [style=Black Node] (11) at (-1, 0.5) {};
            \node [style=Black Node] (12) at (-1, -0.5) {};
            \node [style=Black Node] (13) at (-2, 0) {};
            \node [style=Red Node] (14) at (-3, 0) {};
            \node [style=Black Node] (24) at (3, 0) {};
            \node [style=Black Node] (26) at (-4, 0) {};
            \node [style=Text Node] (27) at (3.5, -0.25) {$P_w$};
            \node [style=Text Node] (28) at (-4.5, -0.25) {$P_t$};
            \node [style=Black Node] (29) at (4, 0) {};
            \node [style=Black Node] (30) at (-5, 0) {};
            \draw [style=Black Edge] (8) to (7);
            \draw [style=Black Edge] (7) to (10);
            \draw [style=Black Edge] (10) to (6);
            \draw [style=Black Edge] (6) to (9);
            \draw [style=Black Edge] (9) to (7);
            \draw [style=Black Edge] (1) to (10);
            \draw [style=Black Edge] (10) to (2);
            \draw [style=Black Edge] (4) to (9);
            \draw [style=Black Edge] (9) to (5);
            \draw [style=Black Edge] (10) to (11);
            \draw [style=Black Edge] (11) to (13);
            \draw [style=Black Edge] (13) to (12);
            \draw [style=Black Edge] (12) to (9);
            \draw [style=Black Edge] (14) to (13);
            \draw [style=Black Edge] (26) to (14);
            \draw [style=Black Edge] (8) to (24);
            \draw [style=Dashed Edge] (24) to (29);
            \draw [style=Dashed Edge] (30) to (26);
            \draw[rounded corners=10pt, style=Black Edge] (2.75, -0.5) rectangle (4.25, 0.5);
            \draw[rounded corners=10pt, style=Black Edge] (-3.75, -0.5) rectangle (-5.25, 0.5);
        \end{tikzpicture}
        \caption{The graph $H'(t,w):=\left(2P_3\boxminus_{A}\forka{t}{2}\right)\boxminus_{A}\spoona{w}{2}$ with path head vertices of $\forka{t}{2}$ and $\spoona{w}{2}$ indicated in red, and $A$ indicated in blue. Dashed edges represent the appended paths with $t$ and $w$ vertices.}
        \label{fig:hprime}
    \end{figure}

    \begin{proof}
        Note that if $k\in \{1,2\}$,~\eqref{equation quadratic expression for lambdak} simplifies down to $\sum_{W\in \binom{S}{k}}|\mathrm{Obs}(G,W)|=\sum_{W\in \binom{S}{k}}|\mathrm{Obs}(G,W)|$, thus we may assume that $k\geq 3$, and thus further that $s\geq 3$.
        Furthermore, note that since $0=\expol{G}{S}{1}=a-b+c$, we must have $c=b-a$. 
        
        Consider the graph $2P_3$ with $A\subseteq V(2P_3)$ the set consisting of the two vertices of degree $2$.
        Then let $H'(t,w):=\left(2P_3\boxminus_{A}\forka{t}{2}\right)\boxminus_{A}\spoona{w}{2}$ as in Figure~\ref{fig:hprime}.
        Then let $H=H(t,w,d):=H'(t,w)\sqcup K_{d+1}\sqcup\overline{K_{s-3}}$.
        Finally let $S'\subseteq V(H)$ denote the set of the $s-3$ isolated vertices, along with the two vertices in $A$ and one vertex arbitrarily chosen from the $K_{d+1}$.
        It is straightforward to verify that for $t,w,d\geq 0$,
        \begin{equation}\label{equation polynomial for Htwd}
            \expolq{H}{S'}=(t-w-5)q^2-(s+2t+d+6)q+(s+t+w+d+11).
        \end{equation}
    
        We can choose parameters $t,w,d$ so that $\expolq{G}{S}=\expolq{H}{S'}$.
        Indeed, if $a\leq -5$, we can choose $t=0$, $w=-a-5$ and $d=b-(s+6)$.
        If instead $a\geq -4$, then we set $t=a+5$, $w=0$ and $d=b-(s+2a+16)$.
        In either case it is straightforward to verify in~\eqref{equation polynomial for Htwd} that this choice of parameters gives $aq^2-bq+c$, and that $t,w,d\geq 0$ (using the hypothesis that $b\geq \max\{s+2a+16,s+6\}$).
    
        Thus, by Theorem~\ref{theorem linear algebraic characterization of copolynimal graphs}, since $\expolq{G}{S}=\expolq{H}{S'}$ we have that for all $1\leq k\leq s$,
        \[
            \sum_{W\in \binom{S}{k}}|\mathrm{Obs}(G,W)|=\sum_{W\in \binom{S'}{k}}|\mathrm{Obs}(H,W)|,
        \]
        so it will suffice to show that~\eqref{equation quadratic expression for lambdak} holds for $H$.
        
        Let us write $H_1,H_2,\dots,H_{s-1}$ for the connected components of $H$, where we will assume $H_1=H'(t,w)$, $H_2=K_d$, and the remaining $H_i$'s are isolated vertices.
        Finally, let $S':=\{v_0,v_1,\dots,v_{s-1}\}$, where $v_0,v_1\in A\subseteq V(H_1)$, and then $v_j\in V(H_j)$ for $j\geq 2$.
        
        We note that for any $W\subseteq S'$,
        \[
            |\mathrm{Obs}(H;W)|=\sum_{i=1}^{s-1}|\mathrm{Obs}(H_i;W\cap V(H_i))|
        \]
        For a fixed $k\geq 1$, let us calculate
        \begin{align}
            \sum_{W\in \binom{S'}{k}}|\mathrm{Obs}(H;W)|&=\sum_{W\in \binom{S'}{k}}\sum_{i=1}^{s-1}|\mathrm{Obs}(H_i;W\cap V(H_i))|\nonumber\\
            &=\sum_{i=1}^{s-1}\sum_{W\in \binom{S'}{k}}|\mathrm{Obs}(H_i;W\cap V(H_i))|\nonumber\\
            &=\sum_{W\in \binom{S'}{k}}|\mathrm{Obs}(H_1;W\cap V(H_1))|+\sum_{i=2}^{s-1}\sum_{W\in \binom{S'}{k}}|\mathrm{Obs}(H_i;W\cap V(H_i))|\label{equation breaking up the sum of W by components}
        \end{align}
        For the first sum above, $\displaystyle\sum_{W\in \binom{S'}{k}}|\mathrm{Obs}(H_1;W\cap V(H_1))|$, we note that $W\cap V(H_1))=A$ for exactly $\binom{s-2}{k-2}$ $W$'s.
        Also, we have $W\cap V(H_1))=\{v_0\}$ for exactly $\binom{s-2}{k-1}$ $W$'s, and the same occurs when $W\cap V(H_1))=\{v_1\}$.
        Furthermore, these $W$'s are all distinct, and the $W$'s considered here correspond to all the $W$'s which give a non-zero contribution to $|\mathrm{Obs}(H_1;W\cap V(H_1))|$.
        Thus, we may write
        \begin{align*}
            \sum_{W\in \binom{S'}{k}}|\mathrm{Obs}(H_1;W\cap V(H_1))|&=\binom{s-2}{k-2}|\mathrm{Obs}(H_1;A)|+\binom{s-2}{k-1}\left(|\mathrm{Obs}(H_1;v_0)|+|\mathrm{Obs}(H_1;v_1)|\right)\\
            &=\binom{s-2}{k-2}|\mathrm{Obs}(H_1;A)|+\left(\binom{s-1}{k-1}-\binom{s-2}{k-2}\right)\left(|\mathrm{Obs}(H_1;v_0)|+|\mathrm{Obs}(H_1;v_1)|\right)\\
        \end{align*}
        Now, for the second sum in~\eqref{equation breaking up the sum of W by components}, if we fix some $i\geq 2$, we have that there are exactly $\binom{s-1}{k-1}$ choices for $W$ for which $v_i\in W$, and in all other cases $|\mathrm{Obs}(H_i;W\cap V(H_i))|=0$, and thus
        \[
            \sum_{W\in \binom{S'}{k}}|\mathrm{Obs}(H_i;W\cap V(H_i))|=\binom{s-1}{k-1}|\mathrm{Obs}(H_i;v_i)|
        \]
        Putting this together and returning to~\eqref{equation breaking up the sum of W by components}, we have
        \begin{align}
            &\sum_{W\in \binom{S'}{k}}|\mathrm{Obs}(H;W)|\nonumber\\
            &=\binom{s-2}{k-2}|\mathrm{Obs}(H_1;A)|+\left(\binom{s-1}{k-1}-\binom{s-2}{k-2}\right)\left(|\mathrm{Obs}(H_1;v_1)|+|\mathrm{Obs}(H_1;v_0)|\right)+\sum_{i=2}^{s-1}\binom{s-1}{k-1}|\mathrm{Obs}(H_i;v_i)|\nonumber\\
            &=\binom{s-2}{k-2}|\mathrm{Obs}(H_1;A)|-\binom{s-2}{k-2}\left(|\mathrm{Obs}(H_1;v_1)|+|\mathrm{Obs}(H_1;v_0)|\right)+\binom{s-1}{k-1}\sum_{v\in S'}|\mathrm{Obs}(H,v)|\nonumber\\
            &=\binom{s-2}{k-2}\left(|\mathrm{Obs}(H_1;A)|-|\mathrm{Obs}(H;v_1)|-|\mathrm{Obs}(H;v_0)|\right)+\binom{s-1}{k-1}\sum_{v\in S'}|\mathrm{Obs}(H,v)|.\label{equation intermediate equation 1}
        \end{align}
    
        Now consider the sum below (using~\eqref{equation breaking up the sum of W by components}).
    
        \begin{align*}
            \sum_{W\in \binom{S'}{2}}|\mathrm{Obs}(H;W)|&=\sum_{i=1}^{s-1}\sum_{W\in \binom{S'}{2}}|\mathrm{Obs}(H_i;W\cap V(H_i))|.
        \end{align*}
        There is exactly one set $W$ and choice of $i\in [s-1]$ such that $|W\cap V(H_i)|=2$, namely $W=A$ and $i=1$.
        In all other non-zero cases, we have $|W\cap V(H_i)|=1$, and in particular, for $v_0$ and $v_1$, there are exactly $s-2$ instances where $W\cap V(H_i)$ is $\{v_0\}$ or $\{v_1\}$.
        For all $j\geq 2$, there are $s-1$ instances where $W\cap V(H_i)=\{v_j\}$.
        This gives us that 
        \begin{align*}
            \sum_{W\in \binom{S'}{2}}|\mathrm{Obs}(H;W)|&=\sum_{i=1}^{s-1}\sum_{W\in \binom{S'}{2}}|\mathrm{Obs}(H_i;W\cap V(H_i))|\\
            &=|\mathrm{Obs}(H,A)|+(s-2)\sum_{i=0}^1|\mathrm{Obs}(H,v_i)|+(s-1)\sum_{i=2}^{s-1}|\mathrm{Obs}(H,v_i)|\\
            &=|\mathrm{Obs}(H,A)|-|\mathrm{Obs}(H,v_0)|-|\mathrm{Obs}(H,v_1)|+(s-1)\sum_{v\in S'}|\mathrm{Obs}(H,v)|\\
        \end{align*}
        Rearranging and then multiplying by $\binom{s-2}{k-2}$, we obtain
        \begin{equation}\label{equation intermediate equation 2}
            \binom{s-2}{k-2}(|\mathrm{Obs}(H,A)|-|\mathrm{Obs}(H,v_0)|-|\mathrm{Obs}(H,v_1)|)=\binom{s-2}{k-2}\left(\sum_{W\in \binom{S'}{2}}|\mathrm{Obs}(H;W)|-(s-1)\sum_{v\in S'}|\mathrm{Obs}(H,v)|\right)
        \end{equation}
        Returning to~\eqref{equation intermediate equation 1}, using~\eqref{equation intermediate equation 2}, yields
    
        \begin{align*}
            \sum_{W\in \binom{S'}{k}}|\mathrm{Obs}(H;W)|&=\binom{s-2}{k-2}\left(|\mathrm{Obs}(H_1;A)|-|\mathrm{Obs}(H;v_1)|-|\mathrm{Obs}(H;v_0)|\right)+\binom{s-1}{k-1}\sum_{v\in S'}|\mathrm{Obs}(H,v)|\\
            &=\binom{s-2}{k-2}\left(\sum_{W\in \binom{S'}{2}}|\mathrm{Obs}(H;W)|-(s-1)\sum_{v\in S'}|\mathrm{Obs}(H,v)|\right)+\binom{s-1}{k-1}\sum_{v\in S'}|\mathrm{Obs}(H,v)|\\
            &=\binom{s-2}{k-2}\sum_{W\in \binom{S'}{2}}|\mathrm{Obs}(H;W)|+\left(\binom{s-1}{k-1}-(s-1)\binom{s-2}{k-2}\right)\sum_{v\in S'}|\mathrm{Obs}(H,v)|.
        \end{align*}
    
        Thus,~\eqref{equation quadratic expression for lambdak} holds for $H$, and thus also for $G$.
    \end{proof}
    
    We believe that with the tools we've developed in this work, Theorem~\ref{theorem quadratic forward direction} can be generalized to any degree $\ell$.
    More specifically, we believe that for any $s\geq \ell\geq 2$, and any values $c_\ell,c_{\ell-1},\dots,c_2$, there exists a function $f_{\ell}(s,c_\ell,c_{\ell-1},\dots,c_2)$ such that for any graph $G$ and sensor set $S\subseteq V(G)$ with $|S|=s$, if $\expolq{G}{S}$ has degree at most $\ell$, and if $\expolq{G}{S}[q^i]=c_i$ for all $2\leq i\leq \ell$, then as long as $\expolq{G}{S}[q]\leq f_{\ell}(s,c_\ell,c_{\ell-1},\dots,c_2)$, $G$ and $S$ will satisfy~\eqref{equation condition for degree at most L}.
    The idea is to create $H$ based on the gadgets we developed in Section~\ref{sec:gadgets} to match $\expolq{G}{S}$, and otherwise follow the outline given in the proof of Theorem~\ref{theorem quadratic forward direction}.
    Since this is not sufficient to prove Conjecture~\ref{conjecture degree L} in full generality, we do not formally claim this result here to avoid further muddling this paper with a highly technical, if partial, result.
        
\section{Comparing sensor locations}\label{sec:highdeg}

    Given two polynomials $p_1$ and $p_2$ on $[0,1]$, we will write $p_1\preceq p_2$ if for all $q\in [0,1]$, $p_1(q)\leq p_2(q)$.
    We will write $p_1\prec p_2$ if $p_1\preceq p_2$ and $p_1\neq p_2$.
    If we have two expected value polynomials, $p_1$ and $p_2$, satisfying $p_1 \preceq p_2$, we can see that the choice of sensor locations corresponding to $p_2$ allows for better (or at least the same) network coverage at every possible probability of PMU failure. 
    In the following proposition, we present a method which demonstrates when certain sets of sensor locations are strictly better for fragile power domination.
    
    \begin{proposition}\label{lemma sums imply dominating polynomial}
        Let $G$ be a graph and let $A,B\subseteq V(G)$ with $|A|=|B|=:a$.
        If
        \begin{equation}\label{equation condition for bijection domination result}
            \sum_{A'\in \binom{A}{k}}\sizeobs{G}{A'}\leq \sum_{B'\in \binom{B}{k}}\sizeobs{G}{B'}
        \end{equation}
        for all $k\in [a]$, then
        \begin{equation}\label{equation consequence of domination result}
            \expolq{G}{A}\preceq\expolq{G}{B}.
        \end{equation}
        Moreover, if there exists some $k\in [a]$ where~\eqref{equation condition for bijection domination result} is strict, then~\eqref{equation consequence of domination result} is strict as well.
    \end{proposition}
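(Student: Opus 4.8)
The plan is to reduce the statement to the basis representation of the expected value polynomial established in the proof of Theorem~\ref{theorem linear algebraic characterization of copolynimal graphs}. For a set $S$ of size $a$, writing $a_k := \sum_{W\in\binom{S}{k}}\sizeobs{G}{W}$, that proof shows $\expolq{G}{S}=\sum_{k=0}^a a_k\, q^{a-k}(1-q)^k$. Applying this to both $A$ and $B$, with coefficients $a_k$ and (say) $b_k := \sum_{B'\in\binom{B}{k}}\sizeobs{G}{B'}$ respectively, the difference becomes
\[
    \expolq{G}{B}-\expolq{G}{A}=\sum_{k=0}^a (b_k-a_k)\,q^{a-k}(1-q)^k.
\]

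First I would dispose of the $k=0$ term, which is not covered by the hypothesis~\eqref{equation condition for bijection domination result}: the only $0$-subset is $\emptyset$, and $\obs{G}{\emptyset}=\emptyset$ since the domination step yields $N[\emptyset]=\emptyset$ and no zero forcing can occur. Hence $a_0=b_0=0$ and the $k=0$ term vanishes, so the sum effectively ranges over $k\in[a]$. The key observation is then that for every $q\in[0,1]$ and every $0\leq k\leq a$, the basis function $q^{a-k}(1-q)^k$ is nonnegative, being a product of nonnegative factors. Combined with the hypothesis $b_k-a_k\geq 0$ for all $k\in[a]$, every summand is nonnegative, so the difference is nonnegative on $[0,1]$; this establishes~\eqref{equation consequence of domination result}.

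For the strictness claim, suppose~\eqref{equation condition for bijection domination result} is strict for some $k\in[a]$, so that $b_k-a_k>0$ for that $k$. Then the coefficient tuple $(b_k-a_k)_{k=0}^a$ is not identically zero. Since $\{q^{a-k}(1-q)^k : 0\leq k\leq a\}$ is a basis for the space of polynomials of degree at most $a$ (as noted in Theorem~\ref{theorem linear algebraic characterization of copolynimal graphs}), a nonzero coefficient tuple yields a nonzero difference polynomial, so $\expolq{G}{A}\neq\expolq{G}{B}$. Together with the $\preceq$ relation already established, this gives $\expolq{G}{A}\prec\expolq{G}{B}$.

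There is no deep obstacle here: the result follows almost immediately from the nonnegativity of these Bernstein-type basis functions on $[0,1]$ together with the linear-independence (basis) property for strictness. The only point requiring a moment of care is that the hypothesis omits the index $k=0$, which is handled by the elementary observation that the empty placement observes nothing, so that term contributes equally to both polynomials and cancels.
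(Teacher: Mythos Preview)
Your proof is correct and follows essentially the same approach as the paper: both express $\expolq{G}{S}$ in the Bernstein-type basis $\{q^{a-k}(1-q)^k\}$ and use nonnegativity of these functions on $[0,1]$ together with the hypothesis $b_k\geq a_k$. The only minor stylistic difference is in the strictness step: the paper observes directly that $q^{a-k}(1-q)^k>0$ for $q\in(0,1)$, so a strict coefficient inequality forces a strict pointwise inequality on $(0,1)$, whereas you invoke linear independence of the basis to conclude the difference polynomial is nonzero; both routes yield $\prec$ under the paper's definition.
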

    
    \begin{proof}
        Fix $q\in [0,1]$.
        Starting from~\eqref{equation set view of E}, and
        using~\eqref{equation condition for bijection domination result} we can write
        \begin{align}
            \expolq{G}{A}&=\sum_{A'\subseteq A}|\obs{G}{A'}|q^{|A\setminus A'|}(1-q)^{|A'|}\\
            &=\sum_{k=1}^{a}\sum_{A'\in\binom{A}{k}}\sizeobs{G}{A'}q^{a-k}(1-q)^{k}\notag\\
            &\leq\sum_{k=1}^{a}\sum_{B'\in\binom{B}{k}}\sizeobs{G}{B'}q^{a-k}(1-q)^{k}=\expolq{G}{B}.\label{equation domination result strict or not strict}
        \end{align}
        Thus, $\expolq{G}{A}\preceq\expolq{G}{B}$. 
        
        Furthermore, for any $q\in (0,1)$ and any $k\in [a]$, $q^{a-k}(1-q)^{k}\neq 0$, and thus as long as there exists some $k\in [a]$ such that~\eqref{equation condition for bijection domination result} is strict,~\eqref{equation domination result strict or not strict} is also strict for $q\in (0,1)$, and consequentially~\eqref{equation consequence of domination result} is strict.
    \end{proof}
    
    Consider the relationship between Proposition~\ref{lemma sums imply dominating polynomial} and Theorem~\ref{theorem linear algebraic characterization of copolynimal graphs}.
    Theorem~\ref{theorem linear algebraic characterization of copolynimal graphs} shows that given a graph $G$ and a set $S$, sums of the form $\sum_{S' \in \binom {S}{k}}\sizeobs{G}{S}$ are the parameters which dictate the specific polynomial we get for $\expolq{G}{S}$.
    Proposition~\ref{lemma sums imply dominating polynomial} further shows that there is a monotonic relationship between these sums and the behavior of the polynomial for $q\in [0,1]$.

    It was observed in~\cite{HHHH2002} that in graphs $G$ with at least one vertex with degree 3 or higher, there always exists a minimum power dominating set which does not contain vertices of degree $1$ or $2$.
    A natural heuristic in power domination is that high degree vertices tend to be more helpful as they observe more in the domination step.
    However, we will demonstrate construction of a graph $G$ for which a minimum power dominating set containing only vertices of degree $3$ is strictly better for fragile power domination than any set containing even one sensor on a relatively high degree vertex.

    \begin{construction}\label{construction max degree}
        Let $s\geq 4$ be even and $\ell\in\mathbb{N}$.
        Set $\eta:=\ell+2s$.
        Let $\mathcal{M}=\{M_1,M_2,\dots,M_{s-1}\}$ be a $1$-factorization of $K_s$ with vertex set $V(K_s)=[s]$.
        Let $G_s(\ell)$ denote the graph on $s\cdot\eta$ vertices formed in the following way.
        Let \[V(G_s(\ell)):=\{v_{i,j} \setgivensymbol i\in [s], j\in[ \eta]\}.\]
        Construct the following sets of edges: 
        \begin{align*}
            E_1 &= \{ v_{i,j}v_{i,j+1} : i\in [ s], j\in[ \eta-1] \}, \\
            E_2 &= \{v_{i,2}v_{i',2} : i,i' \in [s]\}, \text{ and}\\
            E_3 &= \{v_{i,2j+2}v_{i',2j+2} : j\in [s-1], ii'\in M_j\}.
        \end{align*}
        Finally, let $E(G_s(\ell)) = E_1 \cup E_2 \cup E_3$.
        See Figure~\ref{figure max degree} for a drawing of $G_4(4)$.
        
        For each $i\in [s]$, we define the $i$th \emph{row}, $R_i:=\{v_{i,j} \setgivensymbol j\in [\eta]\}$, and for each $j\in [\eta]$, we define the $j$th \emph{column}, $C_j:=\{v_{i,j} \setgivensymbol i\in [s]\}$.
        In addition, for $i\in [s]$, we define the $i$th \emph{pendant path}, $L_i:=\{v_{i,j} \setgivensymbol j\geq 2s+1\}$ and the $i$th \emph{fort}, $F_i:=\{v_{i,2j-1} \setgivensymbol j \in [s]\}\cup L_i$. 
        Finally, the \emph{clique vertices} will be $K:=C_{2}$ and the \emph{Special vertices} will be $S:=C_{2s}$.
    \end{construction}
    
    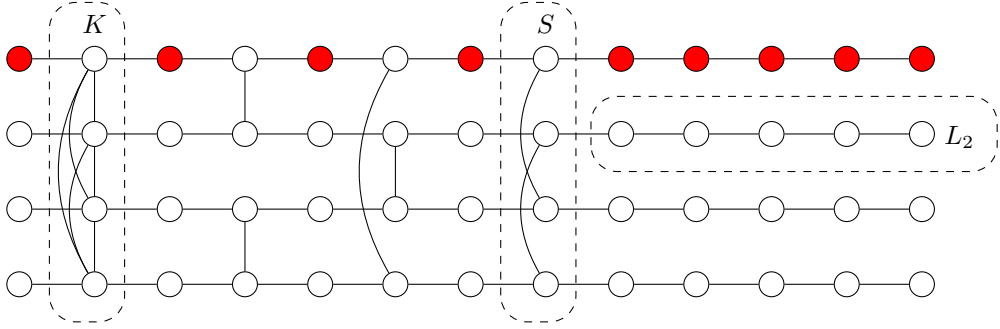
\begin{figure}
        \centering
        \begin{tikzpicture}
            \node [style=Black Node] (0) at (0, 0) {};
            \node [style=Black Node] (1) at (1, 0) {};
            \node [style=Black Node] (2) at (2, 0) {};
            \node [style=Black Node] (3) at (3, 0) {};
            \node [style=Black Node] (4) at (4, 0) {};
            \node [style=Black Node] (5) at (5, 0) {};
            \node [style=Black Node] (6) at (6, 0) {};
            \node [style=Black Node] (7) at (7, 0) {};
            \node [style=Black Node] (8) at (8, 0) {};
            \node [style=Black Node] (9) at (9, 0) {};
            \node [style=Black Node] (10) at (10, 0) {};
            \node [style=Black Node] (11) at (11, 0) {};
            \node [style=Black Node] (12) at (12, 0) {};
            \node [style=Black Node] (13) at (0, 1) {};
            \node [style=Black Node] (14) at (1, 1) {};
            \node [style=Black Node] (15) at (2, 1) {};
            \node [style=Black Node] (16) at (3, 1) {};
            \node [style=Black Node] (17) at (4, 1) {};
            \node [style=Black Node] (18) at (5, 1) {};
            \node [style=Black Node] (19) at (6, 1) {};
            \node [style=Black Node] (20) at (7, 1) {};
            \node [style=Black Node] (21) at (8, 1) {};
            \node [style=Black Node] (22) at (9, 1) {};
            \node [style=Black Node] (23) at (10, 1) {};
            \node [style=Black Node] (24) at (11, 1) {};
            \node [style=Black Node] (25) at (12, 1) {};
            \node [style=Black Node] (26) at (0, 2) {};
            \node [style=Black Node] (27) at (1, 2) {};
            \node [style=Black Node] (28) at (2, 2) {};
            \node [style=Black Node] (29) at (3, 2) {};
            \node [style=Black Node] (30) at (4, 2) {};
            \node [style=Black Node] (31) at (5, 2) {};
            \node [style=Black Node] (32) at (6, 2) {};
            \node [style=Black Node] (33) at (7, 2) {};
            \node [style=Black Node] (34) at (8, 2) {};
            \node [style=Black Node] (35) at (9, 2) {};
            \node [style=Black Node] (36) at (10, 2) {};
            \node [style=Black Node] (37) at (11, 2) {};
            \node [style=Black Node] (38) at (12, 2) {};
            \node [style=Black Node] (39) at (1, 3) {};
            \node [style=Black Node] (40) at (3, 3) {};
            \node [style=Black Node] (41) at (5, 3) {};
            \node [style=Black Node] (42) at (7, 3) {};
            \node [style=Red Node] (43) at (0, 3) {};
            \node [style=Red Node] (44) at (2, 3) {};
            \node [style=Red Node] (45) at (4, 3) {};
            \node [style=Red Node] (46) at (6, 3) {};
            \node [style=Red Node] (47) at (8, 3) {};
            \node [style=Red Node] (48) at (9, 3) {};
            \node [style=Red Node] (49) at (10, 3) {};
            \node [style=Red Node] (50) at (11, 3) {};
            \node [style=Red Node] (51) at (12, 3) {};
            \node [style=Text Node] (52) at (1, 3.5) {$K$};
            \node [style=Text Node] (53) at (7, 3.5) {$S$};
            \node [style=Text Node] (54) at (12.5, 2) {$L_2$};
            \draw [style=Black Edge] (39) to (27);
            \draw [style=Black Edge] (27) to (14);
            \draw [style=Black Edge] (14) to (1);
            \draw [style=Black Edge, bend left] (1) to (27);
            \draw [style=Black Edge, bend right] (39) to (14);
            \draw [style=Black Edge, bend right] (39) to (1);
            \draw [style=Black Edge] (40) to (29);
            \draw [style=Black Edge] (16) to (3);
            \draw [style=Black Edge] (43) to (39);
            \draw [style=Black Edge] (27) to (26);
            \draw [style=Black Edge] (13) to (14);
            \draw [style=Black Edge] (1) to (0);
            \draw [style=Black Edge] (1) to (2);
            \draw [style=Black Edge] (2) to (3);
            \draw [style=Black Edge] (3) to (4);
            \draw [style=Black Edge] (4) to (5);
            \draw [style=Black Edge] (5) to (6);
            \draw [style=Black Edge] (6) to (7);
            \draw [style=Black Edge] (7) to (8);
            \draw [style=Black Edge] (8) to (9);
            \draw [style=Black Edge] (9) to (10);
            \draw [style=Black Edge] (10) to (11);
            \draw [style=Black Edge] (11) to (12);
            \draw [style=Black Edge] (25) to (24);
            \draw [style=Black Edge] (24) to (23);
            \draw [style=Black Edge] (23) to (22);
            \draw [style=Black Edge] (22) to (21);
            \draw [style=Black Edge] (21) to (20);
            \draw [style=Black Edge] (20) to (19);
            \draw [style=Black Edge] (19) to (18);
            \draw [style=Black Edge] (18) to (17);
            \draw [style=Black Edge] (17) to (16);
            \draw [style=Black Edge] (16) to (15);
            \draw [style=Black Edge] (15) to (14);
            \draw [style=Black Edge] (27) to (28);
            \draw [style=Black Edge] (28) to (29);
            \draw [style=Black Edge] (29) to (30);
            \draw [style=Black Edge] (30) to (31);
            \draw [style=Black Edge] (31) to (32);
            \draw [style=Black Edge] (32) to (33);
            \draw [style=Black Edge] (33) to (34);
            \draw [style=Black Edge] (34) to (35);
            \draw [style=Black Edge] (35) to (36);
            \draw [style=Black Edge] (36) to (37);
            \draw [style=Black Edge] (37) to (38);
            \draw [style=Black Edge] (51) to (50);
            \draw [style=Black Edge] (50) to (49);
            \draw [style=Black Edge] (49) to (48);
            \draw [style=Black Edge] (48) to (47);
            \draw [style=Black Edge] (47) to (42);
            \draw [style=Black Edge] (42) to (46);
            \draw [style=Black Edge] (46) to (41);
            \draw [style=Black Edge] (41) to (45);
            \draw [style=Black Edge] (45) to (40);
            \draw [style=Black Edge] (40) to (44);
            \draw [style=Black Edge] (44) to (39);
            \draw [style=Black Edge, bend right] (42) to (20);
            \draw [style=Black Edge, bend right] (33) to (7);
            \draw [style=Black Edge] (31) to (18);
            \draw [style=Black Edge, bend right] (41) to (5);
            \draw[rounded corners=10pt, style=Dashed Edge] (0.4, -0.5) rectangle (1.4, 3.75);
            \draw[rounded corners=10pt, style=Dashed Edge] (6.4, -0.5) rectangle (7.4, 3.75);
            \draw[rounded corners=10pt, style=Dashed Edge] (7.6, 1.5) rectangle (13, 2.5);
        \end{tikzpicture}
        \caption{The graph $G_4(5)$ from Construction~\ref{construction max degree} with $F_1$ indicated in red.}\label{figure max degree}
    \end{figure}
    
    As we will see in the result below, as long as $\ell$ is large enough, the set $S$ is a strictly better location for sensors than any set of size $\gamma_P(G_s(\ell))$ containing a vertex from $K$.
    This defies the high degree heuristic as vertices in $K$ have degree $\binom{s}2+2$, while all other vertices in $G_s(\ell)$ have degree at most $3$, and in particular $S$ contains only vertices of degree $3$. 
    \begin{proposition}
        Let $s\geq 4$ be even and $\ell\in \mathbb{N}$.
        Let $G:=G_s(\ell)$ from Construction~\ref{construction max degree}.
        All the following are true.
        \begin{enumerate}
            \item
                $\gamma_P(G)=s$,\label{item power domination number of G}
            \item
                $S$ and $K$ are minimum power dominating sets of $G$, and\label{item min dominating sets of G}
            \item
                For any $\ell>s^22^{s+1}$, we have $\expolq{G}{A}\prec\expolq{G}{S}$ for any set $A$ of size $s$ which contains a vertex from $K$.\label{item high degree vertices are not always good}
        \end{enumerate}
    \end{proposition}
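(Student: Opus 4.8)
The three parts can be handled in order, with the bulk of the work in the last. For parts~\ref{item power domination number of G} and~\ref{item min dominating sets of G} I would lean on the fort structure already baked into the construction. Each $F_i$ sits entirely inside the row $R_i$, and a direct check of the fort condition shows $F_i$ is a fort with entrance $\entrance{F_i}=\{v_{i,2k}:k\in[s]\}$, so that $F_i\cup\entrance{F_i}=R_i$. Since distinct rows are disjoint, the result of Bozeman et al.~\cite{BBEFFH2019} forces at least one sensor into each $R_i$, giving $\gamma_P(G)\ge s$. For the matching upper bound, I would verify directly that $K=C_2$ is power dominating: the domination step observes columns $1,2,3$ in every row simultaneously, each $v_{i,3}$ then forces $v_{i,4}$, and because all rows advance together every even-column vertex always sees its (already observed) matching partner, so forcing sweeps rightward through the grid and out along every pendant path. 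The same simultaneous-sweep argument (run leftward and rightward from column $2s$) shows $S=C_{2s}$ is power dominating. Hence $\gamma_P(G)=s$ and both $S$ and $K$ are minimum power dominating sets.

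For part~\ref{item high degree vertices are not always good}, the plan is to invoke Proposition~\ref{lemma sums imply dominating polynomial}: it suffices to show $\sum_{A'\in\binom{A}{k}}\sizeobs{G}{A'}\le\sum_{S'\in\binom{S}{k}}\sizeobs{G}{S'}$ for every $k\in[s]$, with strict inequality for at least one $k$. The crucial device is the decomposition $\sizeobs{G}{X}=\ell\cdot p(X)+q(X)$, where $p(X)\in\{0,\dots,s\}$ is the number of pendant paths $L_i$ fully observed by $X$ and $q(X)\in\{0,\dots,2s^2\}$ counts observed grid vertices (those in columns $1,\dots,2s$). Each pendant is observed all-or-nothing: the only way into $L_i$ without a sensor in $L_i$ is for $v_{i,2s}$ to force $v_{i,2s+1}$, after which the entire tail fills in. For $S=C_{2s}$ the fort property pins this down exactly, giving $p(S')=|S'|$, since $v_{i,2s}\in S'$ observes $L_i$ while any row missed by $S'$ keeps its fort $F_i\supseteq L_i$ unobserved; hence $\sum_{S'\in\binom{S}{k}}p(S')=k\binom{s}{k}$.

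Next I would bound the pendant count for $A$. Writing $p(A')=\sum_i\mathbf{1}[L_i\text{ observed by }A']$ and using that $L_i$ observed forces $A'\cap R_i\neq\emptyset$, a double count gives $\sum_{A'\in\binom{A}{k}}p(A')\le\sum_{A'}|\{i:A'\cap R_i\neq\emptyset\}|\le k\binom{s}{k}$. What makes $A$ strictly worse is that a clique vertex observes no pendant by itself: one computes $\sizeobs{G}{v_{i,2}}=s+3$ (forcing out of $C_2$ stalls at column $4$) while $\sizeobs{G}{v_{i,2s}}=\ell+4$, so $p(v_{i,2})=0$. Consequently at $k=1$ the clique vertex in $A$ gives $\sum_{v\in A}p(v)\le s-1<s$, a genuine pendant deficit. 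Setting $D_k:=\sum_{S'}\sizeobs{G}{S'}-\sum_{A'}\sizeobs{G}{A'}=\ell\big(k\binom{s}{k}-\sum_{A'}p(A')\big)+\big(\sum_{S'}q(S')-\sum_{A'}q(A')\big)$, the grid term is at most $2s^2\binom{s}{k}<s^22^{s+1}<\ell$ in absolute value, so a positive pendant deficit forces $D_k>0$; in particular $D_1>0$.

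The remaining and hardest step is to control the equality case, namely to show $D_k\ge0$ even when $k\binom{s}{k}-\sum_{A'}p(A')=0$ and the grid term is no longer negligible. I would analyze the matching-gated forcing directly: if $T$ is the set of rows driven from the left by clique-type sensors, then rightward forcing advances a row past the matching $M_j$ on column $2j+2$ only once its $M_j$-partner has also advanced, so the rows that reach the pendants are obtained by iteratively deleting the currently unmatched rows of $T$. Because $M_1,\dots,M_{s-1}$ form a $1$-factorization of the connected graph $K_s$, no proper nonempty row set survives all the filters, which is exactly why clique sensors cannot cheaply claim their own pendants. The main obstacle is making this precise for a completely arbitrary $A$, whose sensors may simultaneously drive rows from the left (clique and low-column sensors) and complete rows from the right (special and pendant sensors); the goal is to show that the equality $\sum_{A'}p(A')=k\binom{s}{k}$ can hold only at $k=s$ with $A$ itself power dominating, where $D_s=0$ causes no harm. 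Granting this, $D_k\ge0$ for all $k$ with $D_1>0$, and Proposition~\ref{lemma sums imply dominating polynomial} delivers $\expolq{G}{A}\prec\expolq{G}{S}$.
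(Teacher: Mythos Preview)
Your framework for all three parts matches the paper's: the fort argument for~\ref{item power domination number of G}--\ref{item min dominating sets of G}, the appeal to Proposition~\ref{lemma sums imply dominating polynomial} for~\ref{item high degree vertices are not always good}, and the decomposition $\sizeobs{G}{X}=\ell\cdot p(X)+q(X)$ with $0\le q(X)\le 2s^2$ are exactly what the paper uses. Your handling of $k=1$ and $k=s$ is fine.

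The genuine gap is precisely where you flag it: for $2\le k\le s-1$ you need to rule out $\sum_{A'\in\binom{A}{k}}p(A')=k\binom{s}{k}$, and your matching-filter sketch, while pointing at the right mechanism, does not handle a general $A$ (as you admit). The paper closes this more cheaply than you anticipate. Since $p(A')\le|\{i:A'\cap R_i\neq\emptyset\}|\le k$ for every $A'$, equality in the sum would force $p(A')=k$ for \emph{every} $A'\in\binom{A}{k}$; hence it suffices to exhibit, for each such $k$, one $A'$ with $p(A')\le k-1$. The paper does this via a two-case split on $A$. If some row contains at least two vertices of $A$, take any $k$-subset $A'$ containing two of them; it meets at most $k-1$ rows, so $p(A')\le k-1$. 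Otherwise $A$ has exactly one vertex per row; choose any $A'$ containing the clique vertex, say $v_{1,2}$, and note that since $k\le s-1$ there is a row $i'$ with $A'\cap R_{i'}=\emptyset$. The $1$-factorization supplies an even column $j'\ge 4$ with $v_{1,j'}v_{i',j'}\in E(G)$; because the two odd-column neighbours of $v_{i',j'}$ lie in the fort $F_{i'}$ (unobservable since $A'\cap R_{i'}=\emptyset$), forcing along row~$1$ from $v_{1,2}$ stalls at $v_{1,j'}$, and $L_1\not\subseteq\obs{G}{A'}$. That single deficient $A'$ already gives $D_k\ge \ell-\binom{s}{k}2s^2>0$. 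Your iterative-filter picture tries to establish the stall globally across all rows driven from the left; the paper instead localises the obstruction to one carefully chosen column in one row, which is what lets the argument go through for arbitrary $A$.
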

    
    \begin{proof}
        For~\ref{item power domination number of G} and~\ref{item min dominating sets of G}, we first note that each of the $F_i$'s are forts, and $F_i$ along with its entrance is $R_i$.
        Since the $R_i$'s are pairwise vertex-disjoint, we need at least $s$ vertices in any power dominating set to intersect with each fort, so $\gamma_P(G)\geq s$, while on the other hand it is easy to verify that $S$ and $K$ are power dominating sets of size $s$, and indeed any column forms a power dominating set.
    
        For~\ref{item high degree vertices are not always good}, let $\ell>s^22^{s+1}$.
        Towards applying Proposition~\ref{lemma sums imply dominating polynomial}, for $k\in [s]$ and $X\in \binom{V(G)}{s}$, let
        \[
            \lambda_k(X):=\sum_{X'\in \binom{X}{k}}\sizeobs{G}{X'}.
        \]
        We note that since $L_i\subseteq\obs{G}{v_{i,2s}}$ for each $i$, we have that for any $k\in [s]$,
        \[
            \lambda_k(S)\geq \binom{s}{k}k\ell.
        \]
        On the other hand, for any $A\in \binom{V(G)}{s}$, if there exists some $A'\in \binom{A}{k}$ such that $\obs{G}{A'}$ does not contain at least $k$ of the sets $L_i$, then
        \[
            \lambda_k(A)\leq\binom{s}{k}(k\ell+2s^2)-\ell,
        \]
        and so
        \[
            \lambda_k(S)-\lambda_k(A)\geq \binom{s}{k}2s^2-\ell=\binom{s}{k}2s^2-s^22^{s+1}>0,
        \]
        where we use that $\binom{s}{k}<2^s$ for any $s\geq k\geq 1$.
        Thus, our goal will be to show that for any $A\in \binom{V(G)}{s}$ which contains a vertex from $K$ has such a set $A'$ for each $k\in [s]$.
        Fix such an $A$ and $k$.
    
        \textbf{Case 1:} $k=1$.
        We note that the only vertices $v\in V(G)$ which have the property that $L_i\subseteq\obs{G}{v}$ are vertices in $S$, so since $A\neq S$, we are done.
    
        \textbf{Case 2:} $k=s$.
        Since $S$ is a power dominating set, we have trivially that $\lambda_s(S)=|V(G)|\geq \lambda_s(A)$.
    
        \textbf{Case 3:} $2\leq k\leq s-1$ and there exists some $i'\in [s]$ such that $|R_{i'}\cap A|\geq 2$.
        Let $A'\in \binom{A}{k}$ be a set such that $|R_{i'}\cap A'|\geq 2$.
        Then, there exists at most $k-1$ choices of $i\in [s]$ such that $R_i\cap A'\neq \emptyset$.
        Since each $F_i$ is a fort, the entrance of the fort is completely contained in $R_i$ and $L_i\subseteq F_i$, this implies that $A'$ can observe at most $k-1$ of the $L_i$'s.
    
        \textbf{Case 4:} $2\leq k\leq s-1$ and $|R_i\cap A|=1$ for all $i\in [s]$.
        Assume without loss of generality that $v_{1,2}\in A\cap K$, and let $A'\in \binom{A}{k}$ be a set with $v_{1,2}\in A'$.
        We claim that $L_1\not\in\obs{G}{A'}$.
        Instead, assume to the contrary that $L_1\in\obs{G}{A'}$, and let us work backwards.
        At some point $v_{1,2s}$ must have forced $v_{1,2s+1}$, which means that $v_{1,2s-1}$ must have already been observed prior to this, and $v_{1,2s-1}$ was not forced by $v_{1,2s}$.
        In particular, since $\mathrm{deg}(v_{1,2s})=2$, $v_{1,2s-1}$ must have forced $v_{1,2s-2}$.
        In this case, we can assume without loss of generality that actually $v_{1,2s-1}$ forced $v_{1,2s}$.
        Following the same logic, we can assume without loss of generality that $v_{1,2s-4}$ forced $v_{1,2s-3}$, which forced $v_{1,2s-2}$, and continuing along in this manner, we eventually find that we can assume without loss of generality that $v_{1,2}$ forced $v_{1,3}$, which ultimately lead to $L_1$ being observed.
    
        Let $i'\in [s]$ be such that $A'\cap R_{i'}=\emptyset$, and let $j'\in [\eta]$, $j'\geq 4$ denote the index such that $v_{1,j'}v_{i',j'}\in E(G)$ (such a column must exist since $\mathcal{M}$ is a $1$-factorization of $K_s$).
        By the preceding discussion, we must have that $v_{1,j'}$ forces $v_{1,j'+1}$, but the only other neighbors of $v_{i',j'}$ are in $F_{i'}$, a fort whose entrance and the entire fort is contained in $R_{i'}$, disjoint from $A'$.
        Thus $v_{i',j'}$ cannot be observed, and so $v_{1,j'}$ cannot force $v_{1,j'+1}$, a contradiction. 
    \end{proof}

\section{Acknowledgements}
    This project was sponsored, in part, by the Air Force Research Laboratory via the Autonomy Technology Research Center, University of Dayton, and Wright State University.
    This research was also supported by Air Force Office of Scientific Research award 23RYCOR004.

\bibliographystyle{plain}
\bibliography{bib}

\newpage
\appendix
\section{General Definitions and Notation}\label{sec:basic_definitions}
        For a given finite set $A$, the \emph{order} of $A$, denoted $|A|$, is the number of elements in $A$.
        The \emph{natural numbers}, denoted $\mathbb{N}$, is the set $\{0,1,2,\ldots\}$.
        
        A \emph{graph} is a set of vertices, $V(G)$, together with a set of edges, $E(G)$, consisting of unordered pairs of distinct vertices.
        Let $u$ and $v$ be vertices in $V(G)$.
        The edge between $u$ and $v$ will be written $uv$.
        If $uv \in E(G)$, then $u$ and $v$ are \emph{neighbors}.
        The \emph{degree} of $u$, denoted $\deg(u)$, is the number of neighbors $u$ has.
        The \emph{closed neighborhood of $u$}, denoted $N[u]$, is the set of neighbors of $u$ along with $u$ itself.
        In a similar way, the closed neighborhood of $S \subseteq V(G)$, denoted $N[S]$, is the set $\displaystyle\cup_{u \in S}N\left[u\right]$.

        Let $n \in \mathbb{N}$.
        The \emph{complete graph on $n$ vertices} or \emph{clique on $n$ vertices}, denoted $K_n$, is a graph where $|V(G)| = n$ and $E(G) = \binom{V(G)}{2}$.
        The \emph{empty graph on $n$ vertices} or \emph{independent graph on $n$ vertices}, denoted $\overline{K_n}$, is a graph where $|V(G)| = n$ and $E(G) = \emptyset$.
        The \emph{path on $n$ vertices}, denoted $P_n$, is a graph where $V(G)$ can be labeled $\{v_i \setgivensymbol i \in \mathbb{N}, i < n\}$ such that $E(G) = \{v_jv_{j+1} \setgivensymbol j \in \mathbb{N}, j<n-1\}$.
        The \emph{length of a path} is the number of edges it contains.
        The \emph{cycle on $n$ vertices}, denoted $C_n$, is a graph where $V(G)$ can be labeled as in $P_n$ such that $E(G) = E(P_n) \cup \{v_0v_n\}$.

        Given a graph $G$, the graph $H$ is a \emph{subgraph} of $G$, written $H \subseteq G$, if $V(H) \subseteq V(G)$ and $E(H) \subseteq E(G)$.
        The graph $H$ is an \emph{induced subgraph} of $G$ if $V(H) \subseteq V(G)$ and $E(H) = \{ uw \in E(G) \setgivensymbol u,w\in V(H)\}$.
        A subgraph $H$ is a \emph{spanning subgraph} of $G$ if $V(H) = V(G)$ and $V(H) \subseteq V(G)$.
        A \emph{path in $G$} is a subgraph $H\subseteq G$ where $H$ is path graph.
        A graph is \emph{connected} if there exists a path in $G$ between every distinct $v,w \in V(G)$.
        A graph is \emph{bipartite} if $V(G)$ can be partitioned into disjoint sets $A$ and $B$ such that no edge exists between vertices in $A$ nor between vertices in $B$.
        A \emph{$1$-factorization} of a graph is a partitioning of the edge set into perfect matchings.

        Given a graphs $G$ and $H$ with $uv \in E(G)$, $x \not \in V(G)$, $w \in V(H)$, $V(G) \cap V(H) = \emptyset$, define the following graph operations.
        To \emph{subdivide the edge $uv$} is to make the graph $G'$ with $V(G') = V(G) \cup \{x\}$ and $E(G') = \left(E(G) \setminus \{uv\} \right)\cup \{ux, xv\}$.
        The \emph{disjoint union of $G$ and $H$}, denoted $G \sqcup H$, is a graph operation resulting in a graph $G'$ with $V(G') = V(G) \cup V(H)$ and $E(G') = E(G) \cup E(H)$.
        To \emph{append $G$ to $H$ at $u$ and $w$} is to make the graph $G'$ with $V(G') = G \sqcup H$ and $E(G') = E(G \sqcup H) + \{uw\}$. 
        For some $S \subseteq V(G)$ and $T \subseteq V(H)$ such that $|S| = |T|$, to \emph{identify} the vertices in $S$ with the vertices in $T$ is to make the graph $G'$ with $V(G') = V(G) \cup \left( V(H) \setminus T \right)$ and $E(G') = E(G) \cup \{uv \setgivensymbol u,v \in V(H), u,v \not \in T\} \cup \{wf(w) \setgivensymbol w \in S\}$ for some bijective function $f:S \to T$.

\end{document}